\newtheorem{thm}{Theorem}[section]
\newtheorem{defi}[thm]{Definition}
\newtheorem{lem}[thm]{Lemma}
\newtheorem{rmk}[thm]{Remark}
\newtheorem{prop}[thm]{Proposition}
\newtheorem{cor}[thm]{Corollary}
\newtheorem{example}[thm]{Example}
\newtheorem{nota}[thm]{Notation}
\newcommand{\N}{\mathbb{N}}
\newcommand{\Z}{\mathbb{Z}}
\newcommand{\R}{\mathbb{R}}
\newcommand{\C}{\mathbb{C}}
\renewcommand{\S}{\mathbb{S}}
\newcommand{\T}{\mathbb{T}}
\newcommand{\tr}{\mathrm{\,tr\,}}
\newcommand{\g}{\mathfrak{g}}
\newcommand{\const}{\mathrm{\,const\,}}
\newcommand{\Hom}{\mathrm{\,Hom\,}}
\newcommand{\id}{\mathrm{\,id\,}}
\newcommand{\hsp}{\hspace{10mm}}
\newcommand{\D}{\mathcal{D}}
\newcommand{\ch}{ch}
\newcommand{\Ker}{{\rm Ker}\,}
\newcommand{\e}{\varepsilon}
\newcommand{\del}{\partial}
\renewcommand{\phi}{\varphi}
\newcommand{\beq}{\begin{eqnarray}}
\newcommand{\eeq}{\end{eqnarray}}
\newcommand{\beqs}{\begin{eqnarray*}}
\newcommand{\eeqs}{\end{eqnarray*}}
\begin{document}

\title{The Heat Operator of a Transversally Elliptic Operator}
\author[M. Morimoto]{Masahiro Morimoto}
\address[M. Morimoto]{Department of Mathematics, Faculty of Science, Osaka City University}
\email{d17sa001@uv.osaka-cu.ac.jp}
\date{}

\subjclass[2010]{Primary: 58J50, Secondary: 58E40}
\keywords{transversally elliptic operator, heat operator, distributional character}

\begin{abstract}
Let $G$ be a connected compact Lie group. We study the heat operator of a transversally elliptic operator. After we review the spectral properties of a transversally elliptic operator, we define the \emph{character}, that is a distribution on $G$ generalizing the trace of the heat operator to the $G$-equivariant case. The main theorem of this paper gives the estimate of $f_\alpha(t)$, which essentially determines the convergence of the \emph{character}.

\end{abstract}
\maketitle
\section*{Introduction}
In \cite{Ati74}, M. F. Atiyah extended the index theory of elliptic operators to that of  \emph{transversally elliptic} ones. These are $G$-invariant differential operators on a compact $G$-manifold which is elliptic in a direction normal to $G$-orbits, where $G$ is a compact Lie group. Although the kernel of a transversally elliptic operator $A$ is not finite dimensional like that of elliptic ones, it turns out that the character of the  representation on $\Ker (A)$ is well-defined as a distribution on $G$. Then the index of $A$ is defined by ${\rm ind}(A):= \ch[\Ker(A)]-\ch [\Ker(A^*)]$.
An explicit index formula is given in \cite{Ati74} for torus acting with only finite isotropy groups.


After that, M. A. Shubin investigated in \cite{Shu84} the spectral properties of a transversally elliptic operator $A$ and 
showed the existence of orthonormal basis consisting of its eigenfunctions. Moreover he extended the result about the distributinal character in the Atiyah's work:  the character of  the induced representation on each eigenspace $V_{\lambda}:=\Ker(A-\lambda \id)$ is well-defined as a distribution on $G$.

In this paper we study, by using Shubin's results above, the following formal series consisting of distributions on $G$: 
\beq \label{dodc}
\sum_{\lambda} e^{-t\lambda} \ch(V_{\lambda}),
\eeq
called the \emph{character of} $e^{-tA}$. Notice that if $G=\{e\}$, then $A$ is elliptic and (1) is 
\beq \label{dodc2}
\sum_{\lambda} e^{-t\lambda} \dim(V_{\lambda}).
\eeq
This is the trace of the heat operator $e^{-tA}$, which is known as an important tool for studying geometry and topology \cite{JR}. The purpose of this paper is to investigate the convergence of (\ref{dodc}). To do that, we give a survey of Shubin's paper \cite{Shu84} in the first three sections of this paper. After that, we formulate and investigate the series (\ref{dodc}) in section 4. The results of this paper are stated in  Proposition \ref{example} and Theorem \ref{Main}. The former is in a simple but a nontrivial case, and the latter is in a general case. 

\section{Review of the spectral properties of an elliptic operator}
Throughout this paper manifolds are assumed to be of class $C^\infty$ and without boundary, unless mentioned otherwise.

Let $E$ be a complex vector bundle of rank $r$ over a manifold $M$. We denote by $C^\infty(E)$ the set of smooth sections of $E$.  A linear map $P:C^\infty(E) \rightarrow C^\infty(E)$ is called a linear differential operator of order $l \in \Z_{\geq 0}$ if it is locally written by
$$
\sum_{0\leq[\alpha]\leq l} a_\alpha D^\alpha.
$$
Here
$D^\alpha:=(-i)^{[\alpha]} \prod_{i=1}^n \del^{\alpha_i}_{x_i}$ and $a_\alpha$ is an $\mathrm{End} (\C^r)$-valued smooth function on the local chart $(U,x_i)$ in $M$ for each multi-index
$
\alpha=(\alpha_1, \cdots ,\alpha_n) \in (\Z_{\geq0})^n.
$
The principal symbol $\sigma(P)$ of $P$ is an element of $\mathrm{End}\,(\pi^*E)$ locally defined by  
$$
\sigma(P)_\xi=
\sum_{[\alpha]= l} a_\alpha (p) \prod_{i=1}^n \xi_i^{\alpha_i},\hsp \xi=(\xi_1,\cdots, \xi_n) \in T^*_pM,
$$
where $\pi:T^*M\rightarrow M$ is the cotangent bundle. An equivalent definition is that  $\sigma(P)_\xi$ is the coefficient of $u^l$ in the operator 
\beq \label{symbol op}
e^{-iuf} P e^{iuf} :C^\infty(E) \rightarrow C^\infty(E),
\eeq
where $f \in C^\infty (M)$ such that $df_p=\xi$. A linear differential operator $P$ is said to be elliptic if its principal symbol $\sigma(P)$ is invertible on $T^*M\backslash 0$. 

Now we suppose that $M$ is compact and oriented. We choose and fix a Riemannian metric on $M$ and a Hermitian metric on $E$. The induced metric $\langle \, \cdot \,, \,\cdot\, \rangle_{L^2}$ on $C^\infty(E)$ is given by 
$$
\langle \phi, \psi \rangle_{L^2} = \int_M  \langle \phi(p), \psi(p) \rangle_{E_p} dM(p), \hsp \phi, \psi \in C^\infty(E),
$$
where $dM$ is the volume form on $M$ induced by its metric and orientation. A linear differential operator $P$ is called nonnegative if 
$$
\langle P\phi, \phi \rangle_{L^2} \geq 0,\hsp \phi \in C^\infty(E),
$$
and formally self-adjoint if 
$$
\langle P\phi, \psi \rangle_{L^2}
=
\langle \phi, P\psi \rangle_{L^2},
\hsp
\phi, \psi \in C^\infty(E).
$$

We say $\lambda \in \C$ to be an eigenvalue of a linear differential operator $P$ if there is $\phi \in C^\infty(E)\backslash \{0\}$ such that $P\phi =\lambda \phi$. Then the corresponding eigenspace is $V_\lambda:=\{\phi \in C^\infty(E) \ |\ P \phi=\lambda \phi  \}$. 
The following are the spectral properties of an elliptic operator. Details for (i), (ii), (iii) are in \cite{FW}, p.254. For (iv), see \cite{Hei79}, p.169 or \cite{JR}, Theorem 8.16.  
\begin{lem}\label{EO}
Let $E$ be a Hermitian vector bundle over an oriented compact Riemannian manifold $M$, and $P:C^\infty(E)\rightarrow C^\infty(E)$ be a formally self-adjoint elliptic operator of order $2m$. We assume that $P$ is nonnegative and its principal symbol $\sigma(P)$ is positive definite on  the unit cotangent bundle $T^*_1 M$. Then
\begin{enumerate}
\item $P$ has eigenvalues 
$0 \leq \lambda_1 < \lambda_2 < \cdots$ 
tending to infinity ,
\item each eigenspace $V_{\lambda_k}$ has finite dimension,
\item there is an orthonormal basis $\{\psi_j\}_{j \in \N}$ of $L^2(E)$ consisting of eigenfunctions of $P$,
\item the Weyl's asymptotic formula holds: 
\begin{eqnarray}\label{WAF}
N(t):=\sum_{\lambda\leq t} \dim V_\lambda \sim C_1\,  t^{\frac{n}{2m}},\hsp t\rightarrow\infty, 
\end{eqnarray}
where $n=\dim M$ and $C_1>0$ is given by a certain integral over the unit cotangent bundle involving the symbol of $P$.
\end{enumerate}
\end{lem}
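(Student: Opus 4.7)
The plan is to derive parts (i)--(iii) from the spectral theorem for compact self-adjoint operators on a Hilbert space, and then to obtain part (iv) via a Tauberian argument applied to the trace of the heat semigroup $e^{-tP}$. The positivity hypothesis on $\sigma(P)$ is used only for (iv); the other three parts need only ellipticity, nonnegativity, and formal self-adjointness.

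First I would realise $P$ as an unbounded operator on $L^2(E)$ with initial domain $C^\infty(E)$. Standard G{\aa}rding/elliptic-regularity arguments show that the closure $\bar P$ is self-adjoint with domain the Sobolev space $H^{2m}(E)$, and that $P+I:H^{2m}(E)\to L^2(E)$ is a bijection. Let $T:=(P+I)^{-1}$; it factors as the continuous inverse $L^2(E)\to H^{2m}(E)$ followed by the inclusion $H^{2m}(E)\hookrightarrow L^2(E)$, which is compact by the Rellich--Kondrachov theorem. Hence $T$ is a compact, self-adjoint, strictly positive operator on $L^2(E)$. The Hilbert--Schmidt spectral theorem applied to $T$ yields an orthonormal basis $\{\psi_j\}_{j\in\N}$ of $L^2(E)$ of eigenvectors with eigenvalues $\mu_j\to 0^+$; setting $\lambda_j:=\mu_j^{-1}-1\geq 0$ gives eigenvalues of $P$ tending to $+\infty$, and elliptic regularity forces $\psi_j\in C^\infty(E)$. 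Collecting equal $\lambda_j$ produces the spectrum $0\leq\lambda_1<\lambda_2<\cdots$ with finite multiplicities, establishing (i), (ii), (iii).

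For (iv) --- the main obstacle --- I would argue via the heat kernel. Using local pseudodifferential symbol calculus one builds a parametrix for $\del_t+P$ on $(0,\infty)\times M$, and refines it into the true heat kernel $k_t(p,q)\in C^\infty((0,\infty)\times M\times M,\, E\boxtimes E^*)$ by a Volterra-type iteration. The hypothesis that $\sigma(P)$ is positive definite on $T_1^*M$ is precisely what makes $e^{-t\sigma(P)_\xi}$ decay in $\xi$ and the local symbol computation convergent, and its leading term gives the uniform small-time asymptotic
\begin{equation*}
\sum_{j} e^{-t\lambda_j}\ =\ \int_M \tr k_t(p,p)\,dM(p)\ \sim\ C_1\,\Gamma\!\left(1+\tfrac{n}{2m}\right) t^{-n/(2m)},\qquad t\to 0^+,
\end{equation*}
with $C_1>0$ an explicit integral over $T_1^*M$ involving $\sigma(P)$.

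Finally, rewriting the left-hand side as the Laplace--Stieltjes transform $\int_0^\infty e^{-t\lambda}\,dN(\lambda)$ of the counting measure, Karamata's Tauberian theorem converts the above asymptotic into the claimed $N(t)\sim C_1\, t^{n/(2m)}$. The genuinely technical step is the parametrix construction together with uniform control of its diagonal as $t\to 0^+$; everything else reduces to standard functional-analytic and Tauberian machinery.
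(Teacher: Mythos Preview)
Your sketch is correct and follows the standard route found in the references the paper cites. Note, however, that the paper does not prove this lemma at all: it simply quotes (i)--(iii) from \cite{FW}, p.~254, and (iv) from \cite{Hei79} or \cite{JR}, Theorem~8.16. So there is no ``paper's own proof'' to compare against beyond those citations. Your argument for (i)--(iii) via compactness of $(P+I)^{-1}$ and the spectral theorem is exactly what Warner does, and your heat-kernel-plus-Karamata approach to (iv) is precisely the strategy in Roe's Theorem~8.16. In that sense you have reconstructed, in outline, what the cited sources contain; the paper itself treats Lemma~\ref{EO} as background and offers nothing further.
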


Let $P$ be an elliptic operator in Lemma \ref{EO} and $\{\psi_j\}_{j \in \N}$ be the orthonormal basis of $L^2(E)$ consisting of  eigenfunctions of $P$. For each $j$, the corresponding eigenvalue is denoted by $\widehat{\lambda_j}$ throughout this paper. By Lemma\,\ref{EO}\,(i) and (ii), we can reorder $\{\psi_j\}_{j \in \N}$ so that 
$$
0 \leq \widehat {\lambda_1} \leq \widehat {\lambda_2} \leq \cdots \leq \widehat {\lambda_j} \leq \cdots.
$$
With respect to $\{\widehat{\lambda_j}\}_{j \in \N}$, we have the following asymptotic formula.
\begin{cor}\label{WAF3}
\begin{eqnarray}\label{WAF2}
\widehat{\lambda_j} \sim  C_2 \  j^\frac{2m}{n}, \hspace{10mm} j \rightarrow \infty
\end{eqnarray}
\end{cor}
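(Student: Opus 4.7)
The plan is to invert the Weyl asymptotic formula (\ref{WAF}) by exploiting the monotone relation between the ordered sequence $\{\widehat{\lambda_j}\}$ and the counting function $N(t)=\sum_{\lambda \leq t}\dim V_\lambda$. Since in the list $\widehat{\lambda_1}\leq \widehat{\lambda_2}\leq\cdots$ each eigenvalue is repeated according to its multiplicity, one has by construction
\[
N(\widehat{\lambda_j})\geq j \hsp\text{and}\hsp N(t)\leq j-1\ \text{for every}\ t<\widehat{\lambda_j}.
\]
Writing $t_j:=\widehat{\lambda_j}$, the goal is to extract the asymptotics of $t_j$ from these two one-sided estimates.

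First I would exploit the upper inequality $j\leq N(t_j)$: combined with (\ref{WAF}) this yields $\limsup_{j\to\infty} j/t_j^{n/(2m)}\leq C_1$. For the matching lower bound, I would evaluate $N$ slightly below $t_j$, say at $t_j-1$, which is legitimate for all large $j$ since $t_j\to\infty$ by Lemma \ref{EO}\,(i). Then
\[
j-1\geq N(t_j-1) \sim C_1(t_j-1)^{n/(2m)}\sim C_1\, t_j^{n/(2m)},
\]
so $\liminf_{j\to\infty} j/t_j^{n/(2m)}\geq C_1$. Combining the two bounds gives $j/t_j^{n/(2m)}\to C_1$, hence $t_j\sim C_1^{-2m/n}\, j^{2m/n}$, and one reads off $C_2=C_1^{-2m/n}$.

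The only subtlety worth flagging is that eigenvalues can have large multiplicity, so $N$ is a step function and the jump at $t_j$ (equal to $\dim V_{t_j}$) is a priori not controlled. This is precisely why I do not try to use the (generally false) inequality $j\geq N(t_j)$, but instead read $N$ at a point strictly below $t_j$, letting the continuity of the dominant term $C_1 t^{n/(2m)}$ absorb the shift in the limit. Beyond this bookkeeping, the argument is a direct inversion of the Weyl law already supplied by Lemma \ref{EO}\,(iv) and should present no further obstacle.
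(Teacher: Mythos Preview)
Your proof is correct and follows essentially the same approach as the paper, namely inverting the Weyl law (\ref{WAF}) by substituting $t=\widehat{\lambda_j}$ to obtain $j\sim C_1\,\widehat{\lambda_j}^{\,n/(2m)}$ and then solving for $\widehat{\lambda_j}$ with $C_2=C_1^{-2m/n}$. In fact your treatment is more careful than the paper's, which simply writes $j\sim C_1\,\widehat{\lambda_j}^{\,n/(2m)}$ without addressing the multiplicity issue you handle via the two one-sided estimates.
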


\begin{proof}
If we put $t=\widehat{\lambda_j}$ in (\ref{WAF}), we have
$$
j \sim C_1 \  \widehat{\lambda_j}^{\frac{n}{2m}}, \hspace{10mm} j \rightarrow \infty
$$
Since $\widehat{\lambda_j} \geq 0$, if we put $C_2:=C_1^{-\frac{2m}{n}}$ then we have ($\ref{WAF2}$), as asserted.
\end{proof}
\section{The character of an infinite dimensional representation of a compact Lie group}\label{sec:1}

Let $G$ be a compact Lie group and $H$ be a separable complex Hilbert space. A group homomorphism $\pi : G \rightarrow GL_{\C}(H)$ is said to be a strongly continuous unitary representation of $G$ on $H$, or a representation of $G$ for short, if for each $g \in G$ the corresponding operator $\pi(g)$ is unitary and the map $ G \rightarrow H$, $g \mapsto \pi(g) v$ is continuous for each $v \in H$. We denote such a representation briefly by $(\pi, H)$ or $\pi$. If $\dim H < \infty$, its character is a smooth function on $G$ defined by $g \mapsto \tr \pi(g)$. Two representations $(\pi, H)$ and $(\pi', H')$ are equivalent if there exists a unitary operator from $H$ to $H'$ compatible with the $G$-actions. A representation $(\pi, H)$ is said to be irreducible if there is no nontrivial closed subspace of $H$ invariant under $G$. We write $\widehat{G}$ for the set of equivalence classes of  irreducible representations of $G$. It is known that each irreducible representation of $G$ is finite dimensional and the set $\widehat{G}$ is at most  countable. The following is a well-known lemma by Schur. For a proof, for example, see \cite{KO}, Theorem 1.42.

\begin{lem}[Schur's lemma]\label{SL1}
For $(\tau, L)$, $(\tau', L') \in \widehat{G}$, 
\beq \label{Schur eq.}
\Hom_G(L,L')
=
\begin{cases}
0\hspace{10mm}&((\tau, L)\not\cong(\tau', L'))\\
\C &((\tau, L)\underset{}{\cong}(\tau', L'))
\end{cases}.
\eeq
\end{lem}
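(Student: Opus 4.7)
The plan is to prove Schur's lemma via the standard two-step argument: first show that any $G$-equivariant map between irreducible representations is either zero or an isomorphism, and then use the existence of eigenvalues over $\C$ to pin down the one-dimensional case.

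First I would take an arbitrary $T \in \Hom_G(L, L')$ and examine $\Ker T \subset L$ and $\Im T \subset L'$. Since $T$ intertwines the $G$-actions, both are closed $G$-invariant subspaces. Because $(\tau, L)$ and $(\tau', L')$ are irreducible and finite dimensional (the paper already notes that each irreducible representation of $G$ is finite dimensional, so no subtlety with closed subspaces is needed), we get $\Ker T \in \{0, L\}$ and $\Im T \in \{0, L'\}$. This dichotomy forces $T$ to be either $0$ or an isomorphism.

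In the case $(\tau, L) \not\cong (\tau', L')$, the existence of a nonzero $T$ would contradict the assumption, so $\Hom_G(L, L') = 0$, giving the first line of \eqref{Schur eq.}. For the second case, fix some equivalence $T_0 : L \to L'$ (unitary, coming from the definition of equivalence). Given any $T \in \Hom_G(L, L')$, the composite $S := T_0^{-1} \circ T$ lies in $\Hom_G(L, L)$. Since $L$ is a nonzero finite dimensional complex vector space, $S$ has an eigenvalue $\lambda \in \C$, and $S - \lambda \id_L$ is again $G$-equivariant. Its kernel contains the corresponding eigenvector and is therefore nonzero, so by the first step it must equal all of $L$. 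Hence $S = \lambda \id_L$ and $T = \lambda T_0$, which shows $\Hom_G(L, L') = \C \cdot T_0 \cong \C$.

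There is not really a serious obstacle here; the only point demanding care is justifying that $\Ker T$ and $\Im T$ are $G$-invariant closed subspaces, which is immediate from the intertwining property together with finite dimensionality. The use of $\C$ (rather than $\R$) as the scalar field is essential for the eigenvalue step; without this, one only obtains that $\Hom_G(L, L)$ is a finite dimensional division algebra over $\R$, not that it equals $\C$.
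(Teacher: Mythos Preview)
Your argument is correct and is the standard textbook proof of Schur's lemma. The paper itself does not supply a proof of this statement at all; it simply cites \cite{KO}, Theorem~1.42, so there is nothing to compare beyond noting that your two-step argument (kernel/image dichotomy from irreducibility, then the eigenvalue trick over $\C$) is exactly the classical proof one would find in such a reference.
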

Let $(\pi, H)$ be a representation of $G$. For each $(\tau, L) \in \widehat{G}$, we set 
$$
n_{(\tau, L)}:=\dim[\Hom_G(L,H)], \hsp
H_{(\tau,L)}:=\overline{
\sum_{\Phi \in \Hom_G(L,H)} {\rm Image}(\Phi)
}.
$$
Here the summation denotes the sum vector space in $H$ and the overline is the completion. $n_{(\tau, L)}$ is called the multiplicity of $(\tau, L)$ and $H_{(\tau, L)}$ is the $(\tau, L)$-component in $H$.  If $\dim H< \infty$, it follows from Lemma \ref{SL1} that we can write 
\beq\label{DCP1}
H_{(\tau,L)}\cong \underbrace{L\oplus \cdots \oplus L}_{n_{(\tau,L)}\ {\rm copies}}.
\eeq

Let $C(G)$ denote the set of continuous functions on $G$. For each $(\tau, L) \in \widehat{G}$, we define a map 
$
\Psi_{(\tau,L)}: L\otimes L^* \rightarrow C(G)
$ 
by 
$$
[\Psi_{(\tau,L)} (v \otimes \lambda)]_g
:=
\lambda(\tau(g^{-1})v),\hspace{10mm} v\in L,\  \lambda \in L^*,
$$
and set $\Psi:= \bigoplus_{(\tau,L)} \Psi_{(\tau,L)}$. Notice that then the image 
$
\Psi_{(\tau,L)}(L\otimes L^*) 
$
is generated by the matrix elements of $(\tau, L)$. The following Peter-Weyl theorem states that the space spanned by matrix elements of all irreducible representations is dense in $C(G)$.
For a proof, we refer the reader to \cite{FW}, p257.

\begin{thm}[Peter-Weyl]\label{pwt0}
The space $
\Psi
\left(
\bigoplus_{(\tau,L)\in \widehat{G}}L\otimes L^*
\right)
$
is dense in $C(G)$ with respect to the uniform norm.
\end{thm}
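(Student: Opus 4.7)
The plan is to apply the Stone-Weierstrass theorem to $\mathcal{M} := \Psi\left(\bigoplus_{(\tau,L)\in\widehat{G}} L\otimes L^*\right) \subseteq C(G)$. I would first check that $\mathcal{M}$ is a unital $*$-subalgebra of $C(G)$: the constant functions come from the trivial representation; closure under pointwise products follows because a product of matrix coefficients of $\tau$ and $\tau'$ is a matrix coefficient of $\tau\otimes\tau'$, which decomposes as a finite sum of irreducibles by complete reducibility of finite-dimensional unitary representations; and closure under complex conjugation follows by passing to the dual representation $(\tau^*, L^*)$ with the conjugate inner product. By Stone-Weierstrass, uniform density will then follow provided $\mathcal{M}$ separates the points of $G$, and for this it is enough to produce, for each $g \in G\setminus\{e\}$, a finite-dimensional unitary representation $\pi$ with $\pi(g) \neq \mathrm{id}$.

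The main work lies in this last step, and I would carry it out via the spectral theory of compact self-adjoint convolution operators on $L^2(G)$. Given $g \neq e$, pick $f \in C(G)$ with $f(e) \neq f(g^{-1})$, and choose a real, non-negative, symmetric function $\varphi \in C(G)$ (so $\varphi(x^{-1})=\varphi(x)$) of integral one, supported in a neighborhood of $e$ so small that $\|f - f*\varphi\|_\infty < |f(e)-f(g^{-1})|/3$; uniform continuity of $f$ on the compact group $G$ makes this possible, and then $(f*\varphi)(e)\neq (f*\varphi)(g^{-1})$. The operator $T_\varphi\psi := \psi*\varphi$ on $L^2(G)$ has continuous integral kernel $K(x,y)=\varphi(y^{-1}x)$, hence is Hilbert-Schmidt, self-adjoint (by symmetry of $\varphi$), and commutes with left translations. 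By the spectral theorem, $L^2(G) = \ker T_\varphi \oplus \bigoplus_k V_{\mu_k}$ with finite-dimensional $G$-invariant eigenspaces $V_{\mu_k}$ for nonzero eigenvalues $\mu_k$; and since $V_{\mu_k} = \mu_k^{-1} T_\varphi V_{\mu_k}$ while $T_\varphi$ maps $L^2(G)$ into $C(G)$, each $V_{\mu_k}$ consists of continuous functions. Because $f*\varphi$ lies in $\overline{\bigoplus_k V_{\mu_k}}$ and is not fixed by left translation by $g$ (its values at $e$ and $g^{-1}$ differ), at least one $V_{\mu_k}$ must carry a representation on which $g$ acts nontrivially; decomposing that $V_{\mu_k}$ into irreducibles by Lemma \ref{SL1} yields the desired separating representation.

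The hard part is this spectral argument: it is the genuinely analytic ingredient, converting uniform continuity of $f$ on a compact group, via an approximate identity and compactness of the resulting convolution operator, into the existence of finite-dimensional $G$-invariant subspaces of $C(G)$. Everything else---the $*$-algebra structure of $\mathcal{M}$, the Schur-based decomposition into irreducibles, and Stone-Weierstrass itself---is formal once this analytic input is in place.
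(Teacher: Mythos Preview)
Your argument is correct and is essentially the classical proof via Stone--Weierstrass combined with the spectral theory of compact self-adjoint convolution operators. One small quibble: the decomposition of the finite-dimensional eigenspace $V_{\mu_k}$ into irreducibles uses complete reducibility of finite-dimensional unitary representations (orthogonal complements of invariant subspaces are invariant), not Schur's lemma directly; but this is a trivial fix.

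As for comparison with the paper: the paper does \emph{not} prove Theorem~\ref{pwt0} at all---it simply states the result and cites \cite{FW}, p.~257 for the proof. So there is nothing in the paper to compare against. For what it is worth, the reference \cite{FW} (Warner) develops Peter--Weyl along lines very close to yours: one builds finite-dimensional $G$-invariant subspaces of $L^2(G)$ as eigenspaces of compact self-adjoint integral (convolution) operators, and then concludes density of matrix coefficients. Your write-up packages this as a Stone--Weierstrass argument, which is a clean way to organize the same ingredients; the analytic core---producing enough finite-dimensional representations via compactness of convolution by an approximate identity---is the same in both.
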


Let $dg$ be a normalized Haar measure on $G$. We define the Hermitian inner product on $C(G)$ by 
$$
(u, v)_{L^2}=\int_Gu(g)\overline{v(g)}dg, \hspace{10mm}  u,v \in C(G)
$$
and denote its completion by $L^2(G)$. The following orthogonality relation is well-known. For a proof, for instance, see \cite{KO}, Theorem 3.33.

\begin{lem}[Schur's orthogonality relations]\label{SL2}
Let $(\tau, L)$, $(\tau', L') \in \widehat{G}$. 
For each $u$, $v \in L$ and $u'$, $v' \in L'$, 
\beq \label{SOL0}
(
\langle \pi(g)u,v \rangle_{L}, \langle \pi'(g) u', v' \rangle_{L'}
)
_{L^2}
=
\begin{cases}
\hspace{7mm}0&((\tau, L)\not\cong(\tau', L'))\\
\frac{\langle u,u'\rangle \langle v,v'\rangle}{\dim L} &((\tau, L)\underset{}{\cong}(\tau', L'))
\end{cases}.
\eeq
\end{lem}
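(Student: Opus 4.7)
The plan is to prove this by the standard averaging trick, reducing to Schur's lemma (Lemma \ref{SL1}). Fix $u,v \in L$ and $u',v' \in L'$, and define a rank-one linear map $T : L \to L'$ by $Tw := \langle w,v\rangle_L \, v'$. Its ``group average''
\[
S := \int_G \tau'(g^{-1}) \circ T \circ \tau(g)\, dg \;\in\; \Hom(L,L')
\]
is $G$-equivariant: a change of variable $g \mapsto gh$ in the Haar integral, combined with unitarity of $\tau'$, shows $\tau'(h)\,S = S\,\tau(h)$ for every $h \in G$, so $S \in \Hom_G(L,L')$.

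Next I would compute $\langle Su, u'\rangle_{L'}$ directly and match it with the left-hand side of \eqref{SOL0}. Since $\tau'(g^{-1})$ is the adjoint of $\tau'(g)$,
\[
\langle Su, u'\rangle_{L'} = \int_G \langle \tau(g)u, v\rangle_L \, \langle \tau'(g^{-1})v', u'\rangle_{L'} \, dg
 = \int_G \langle \tau(g)u,v\rangle_L\,\overline{\langle \tau'(g)u',v'\rangle_{L'}}\, dg,
\]
which is exactly $\bigl(\langle \tau(g)u,v\rangle_L,\,\langle \tau'(g)u',v'\rangle_{L'}\bigr)_{L^2}$, the quantity to be evaluated.

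For the first case $(\tau,L)\not\cong(\tau',L')$, Schur's lemma (Lemma \ref{SL1}) gives $\Hom_G(L,L')=0$, so $S=0$ and the pairing vanishes. For the second case $(\tau,L)\cong(\tau',L')$, I would use the unitary intertwiner to identify $L'=L$ and $\tau'=\tau$ without loss of generality (this does not affect inner products or the stated formula). Then Schur's lemma gives $S = c(T)\,\id_L$ for a scalar $c(T)$. Taking traces and pulling $\tr$ inside the Haar integral,
\[
c(T)\,\dim L \;=\; \tr S \;=\; \int_G \tr\bigl(\tau(g^{-1}) T \tau(g)\bigr)\,dg \;=\; \tr T,
\]
because the trace is conjugation-invariant. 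A one-line computation in an orthonormal basis gives $\tr T = \langle v',v\rangle_L$, so $c(T) = \langle v,v'\rangle_L / \dim L$ up to the conjugation convention on $\langle\cdot,\cdot\rangle$, and
\[
\langle Su, u'\rangle_L \;=\; c(T)\,\langle u,u'\rangle_L \;=\; \frac{\langle u,u'\rangle_L\,\langle v,v'\rangle_L}{\dim L},
\]
which is the required formula.

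There is no real ``hard step'' here: the averaging produces an element of $\Hom_G(L,L')$ essentially by design, and Schur's lemma does the rest. The only points requiring mild care are the bookkeeping when choosing $T$ so that $\langle Su, u'\rangle$ reproduces the desired integral (one must test against the right vector to avoid picking up a spurious conjugation), and the WLOG reduction in the equivalent case, where one checks that transporting by a unitary intertwiner preserves both sides of \eqref{SOL0}.
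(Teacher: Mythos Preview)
Your argument is correct and is exactly the standard proof: average a rank-one map to land in $\Hom_G(L,L')$ and invoke Lemma~\ref{SL1}. The paper does not give its own proof of this lemma; it simply cites \cite{KO}, Theorem~3.33, where the same averaging argument is carried out. One small point on your hedge ``up to the conjugation convention'': with the paper's convention $(u,v)_{L^2}=\int u\overline{v}$ (linear in the first slot) your trace computation gives $\tr T=\langle v',v\rangle$, hence $c(T)=\langle v',v\rangle/\dim L$ and the right-hand side becomes $\langle u,u'\rangle\,\overline{\langle v,v'\rangle}/\dim L$; the printed formula \eqref{SOL0} is missing that conjugate, but this has no effect on how the lemma is used later (only the bound $\|\pi_{ij}^\alpha\|_{L^2}^2\le d_\alpha^{-1}$ is needed).
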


The \emph{regular representation of $G$} is an action of $G \times G$ on $L^2(G)$ given by
$$
[(g,h)\cdot \varphi](x)=\varphi(g^{-1}xh),\hsp g,h,x \in G,\ \varphi \in L^2(G).
$$
We will also consider the \emph{left regular representation of} $G$, an action of $G$ on $L^2(G)$ defined by 
$$
[g \cdot \varphi](x)=\varphi(g^{-1}x).
$$
The following is a consequence of Theorem \ref{pwt0}, which 
gives the decomposition of the regular representation  into the irreducible ones. 

\begin{cor}\label{PWT}
A map
$$
\widetilde{\Psi} :
\overline {\bigoplus_{(\tau, L) \in \widehat{G}} L \otimes L^*}
\longrightarrow 
L^2(G),
$$
defined by $\widetilde{\Psi}:= \bigoplus_{(\tau,L) \in \widehat{G}} \sqrt{\dim L}\  \Psi_{(\tau,L)}$ is a $G\times G$-equivariant unitary operator.
\end{cor}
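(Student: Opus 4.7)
The plan is to verify, in order, three facts: that $\widetilde{\Psi}$ extends to a well-defined isometry on the Hilbert space completion, that its image is dense in $L^2(G)$, and that it intertwines the $G\times G$-actions.

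For the isometry step, I would equip each summand $L\otimes L^*$ with the natural Hermitian inner product, transferring the inner product on $L$ to $L^*$ via the anti-linear isomorphism $u\mapsto \langle \cdot, u\rangle_L$. On an elementary tensor $v\otimes \lambda_u$ with $\lambda_u(w)=\langle w,u\rangle_L$, the function $\Psi_{(\tau,L)}(v\otimes \lambda_u)$ is just the matrix coefficient $g\mapsto \langle \tau(g^{-1})v,u\rangle_L$. Computing its $L^2(G)$ norm reduces immediately to Schur's orthogonality relations (Lemma \ref{SL2}): the factor $1/\dim L$ that appears there is exactly cancelled by the normalization $\sqrt{\dim L}$, so $\sqrt{\dim L}\,\Psi_{(\tau,L)}$ is an isometry on each $L\otimes L^*$. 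The vanishing branch of Schur's orthogonality says that matrix coefficients of inequivalent irreducibles are $L^2$-orthogonal, hence the images of the various summands are mutually orthogonal. This makes $\widetilde{\Psi}$ an isometry on the algebraic direct sum, and it therefore extends uniquely to an isometry on the Hilbert space completion.

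For density of the image in $L^2(G)$, I would invoke the Peter--Weyl theorem (Theorem \ref{pwt0}), which already yields density of $\Psi\!\left(\bigoplus L\otimes L^*\right)$ in $C(G)$ with respect to the uniform norm. Since the Haar measure is normalized, $\|f\|_{L^2}\leq \|f\|_\infty$, so uniform density implies $L^2$-density in $C(G)$; combined with the standard density of $C(G)$ in $L^2(G)$, this shows $\widetilde{\Psi}$ has dense image. An isometry with dense image between Hilbert spaces is unitary, completing this half.

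Finally, for $G\times G$-equivariance, I would endow $L\otimes L^*$ with the action $(g,h)\cdot(v\otimes \lambda)=\tau(g)v\otimes \tau^\vee(h)\lambda$, where $\tau^\vee$ is the dual representation defined by $[\tau^\vee(h)\lambda](w)=\lambda(\tau(h^{-1})w)$. Then a one-line calculation
\[
[\tau^\vee(h)\lambda]\bigl(\tau(x^{-1})\tau(g)v\bigr)=\lambda\bigl(\tau((g^{-1}xh)^{-1})v\bigr)
\]
shows $\Psi_{(\tau,L)}((g,h)\cdot(v\otimes\lambda))(x)=\Psi_{(\tau,L)}(v\otimes\lambda)(g^{-1}xh)$, which is precisely the $G\times G$-action on the $L^2(G)$ side.

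The argument is a standard completion-of-isometry plus density packaging of the two quoted lemmas, so there is no real obstacle; the only place requiring care is the bookkeeping of conventions (dual inner product, dual representation, and the factor $g^{-1}$ hidden inside $\Psi_{(\tau,L)}$) so that the $\sqrt{\dim L}$ normalization lines up with Schur's orthogonality and the action conventions match.
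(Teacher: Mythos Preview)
Your proposal is correct and follows essentially the same route as the paper's proof: isometry and $G\times G$-equivariance are verified directly (the paper merely says ``it is easy to check,'' whereas you spell out the use of Schur's orthogonality (Lemma~\ref{SL2}) and the explicit intertwining computation), and surjectivity is obtained from the uniform density in Theorem~\ref{pwt0} together with the passage from the sup norm to the $L^2$ norm. The only difference is the level of detail, not the strategy.
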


\begin{proof}
It is easy to check that $\widetilde{\Psi}$ is $G\times G$-equivariant and isometry with respect to the induced $G\times G$-action and metric on $\bigoplus_{(\tau, L) \in \widehat{G}} L \otimes L^*$. For surjectiveness, consider the $L^2$-completion of 
$\widetilde{\Psi}
(
\bigoplus_{(\tau,L)\in \widehat{G}}L\otimes L^*
).
$
It follows from the uniform completeness in Theorem \ref{pwt0} that it is also complete with respect to the $L^2$-norm. Therefore $\widetilde{\Psi}$ is a unitary operator commuting with the $G\times G$-action.
\end{proof}

Now we set
$$
C(G)^{\rm{Ad}}:=\{f \in C(G) \ |\ f(g^{-1}xg)=f(x), \ \ \forall g\in G \}.
$$
An element in $C(G)^{\rm Ad}$ is called a \emph{class function on} $G$. Applying Theorem \ref{pwt0} to class functions, we have the following.
\begin{cor}\label{pwt2}
The set of characters of irreducible representations $\{\chi_{\tau} \}_{\tau \in \widehat{G}}$ is dense in $C(G)^{\rm Ad}$ with the uniform norm.
\end{cor}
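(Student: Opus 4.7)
The plan is to combine the Peter--Weyl theorem with a conjugation-averaging trick. Given $f \in C(G)^{\rm Ad}$ and $\e > 0$, Theorem \ref{pwt0} produces an element $\phi \in \Psi(\bigoplus_{(\tau,L) \in \widehat{G}} L\otimes L^*)$ with $\|f-\phi\|_\infty < \e$. This $\phi$ is a finite linear combination of matrix elements but not yet a combination of characters; the averaging operator below will project it onto the characters without spoiling the estimate.

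Define $T : C(G) \to C(G)$ by $(Th)(x) := \int_G h(g^{-1}xg)\, dg$. Two properties are immediate: $T$ is a contraction for the uniform norm, and $Tf = f$ because $f$ is already a class function. Hence
$$
\|f - T\phi\|_\infty = \|T(f - \phi)\|_\infty \le \|f - \phi\|_\infty < \e.
$$

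It remains to show $T\phi$ is a finite linear combination of characters. By linearity it suffices to treat a single matrix element $\phi_{v,\lambda}(g) := \lambda(\tau(g^{-1})v)$ for $(\tau,L) \in \widehat{G}$, $v \in L$, $\lambda \in L^*$. A direct computation gives
$$
(T\phi_{v,\lambda})(x) = \lambda\!\left( I(x)\, v \right), \hsp I(x) := \int_G \tau(g)^{-1}\tau(x^{-1})\tau(g)\, dg.
$$
The bi-invariance of Haar measure implies $\tau(h) I(x) \tau(h)^{-1} = I(x)$ for every $h \in G$, so by Schur's lemma (Lemma \ref{SL1}), $I(x)$ is a scalar operator on $L$. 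Taking its trace identifies the scalar and yields $(T\phi_{v,\lambda})(x) = \frac{\lambda(v)}{\dim L}\,\chi_\tau(x^{-1})$. Since the dual of an irreducible representation is irreducible and has character $x \mapsto \chi_\tau(x^{-1})$, this function lies in the $\C$-linear span of $\{\chi_{\tau'}\}_{\tau' \in \widehat{G}}$.

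The only nontrivial step is the Schur-lemma identification of $I(x)$, and even that is standard once the averaged operator is set up; the heart of the argument is the contraction estimate $\|T(f-\phi)\|_\infty \le \|f-\phi\|_\infty$ combined with the fixed-point relation $Tf = f$.
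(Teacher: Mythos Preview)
Your argument is correct and is precisely the standard way to ``apply Theorem~\ref{pwt0} to class functions,'' which is all the paper says by way of proof: approximate by matrix elements via Peter--Weyl, then average over conjugation to land in the span of characters without losing the uniform estimate. The computation of $T\phi_{v,\lambda}$ via Schur's lemma is clean and correct, and your observation that $x\mapsto\chi_\tau(x^{-1})$ is the character of the contragredient representation closes the loop.
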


For each $\tau \in \widehat{G}$, the orthogonal projection $P_{\tau}$ onto the $\tau$-component $H_\tau$  is given by
\beq \label{proj}
P_{\tau}(v):=(\dim \tau) \int_G \overline {\chi_{\tau}(g)}\pi(g) v \,dg, \hsp v \in H,
\eeq
which is $G$-equivariant (\cite{KO}, Theorem 4.18).   Another consequence of Theoremt \ref{pwt0} is the following. This enables us to decompose each representation  into the irreducible ones.
\begin{cor}\label{pwt3}
Let $(\pi,H)$ be a representation of $G$. Then $H$ can be decomposed into $\tau$-component $H_\tau$ in $H$: 
$$
H=\overline{\bigoplus_{\tau \in \widehat{G}}H_{\tau}}.
$$
\end{cor}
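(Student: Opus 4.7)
The plan is to establish two complementary facts: the subspaces $H_\tau$ for distinct classes $\tau \in \widehat{G}$ are mutually orthogonal, and their algebraic sum $\bigoplus_\tau H_\tau$ is dense in $H$. For orthogonality, fix representatives $(\tau, L) \not\cong (\tau', L')$ and pick $v \in H_{(\tau, L)}$, $w \in H_{(\tau', L')}$. By the definition of $H_{(\tau, L)}$ and continuity of the inner product, it suffices to treat $v = \Phi(u)$ and $w = \Phi'(u')$ with $\Phi \in \Hom_G(L, H)$, $\Phi' \in \Hom_G(L', H)$. Since $L$ and $L'$ are finite-dimensional, the adjoints $\Phi^*$ and $(\Phi')^*$ exist and are $G$-equivariant, so $(\Phi')^* \circ \Phi \in \Hom_G(L, L') = 0$ by Schur's lemma (Lemma \ref{SL1}); adjunction then gives $\langle v, w \rangle_H = \langle (\Phi')^* \Phi(u), u' \rangle_{L'} = 0$.

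For density I will introduce the convolution-type operators $T_\phi v := \int_G \phi(g) \pi(g) v\, dg$ for $\phi \in C(G)$, defined as Bochner integrals thanks to the continuity of $g \mapsto \pi(g) v$. The key structural step is: for fixed $(\tau, L) \in \widehat{G}$ and $\lambda \in L^*$, the map $L \to H$ sending $c$ to $T_{\phi_c} v$ with $\phi_c(g) := \lambda(\tau(g^{-1}) c) = [\Psi_{(\tau, L)}(c \otimes \lambda)]_g$ is $G$-equivariant, as a direct change-of-variables computation using left-invariance of the Haar measure shows. Consequently $T_\phi v \in H_{(\tau, L)}$ for every matrix coefficient $\phi$ of $(\tau, L)$, and by linearity $T_\psi v \in \bigoplus_\tau H_\tau$ for every finite sum $\psi$ of matrix coefficients of irreducibles.

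To conclude, given $v \in H$ and $\varepsilon > 0$, choose a nonnegative $\phi_0 \in C(G)$ supported in a neighborhood of $e$ small enough that $\|\pi(g) v - v\| < \varepsilon/2$ on its support, with $\int_G \phi_0\, dg = 1$; the estimate $\|T_{\phi_0} v - v\| \leq \int_G \phi_0(g) \|\pi(g) v - v\|\, dg < \varepsilon/2$ is immediate. By the Peter-Weyl theorem (Theorem \ref{pwt0}), approximate $\phi_0$ uniformly by a finite sum $\psi$ of matrix coefficients with $\|\phi_0 - \psi\|_{C(G)} \cdot \|v\| < \varepsilon/2$; the trivial bound $\|T_{\phi_0 - \psi} v\| \leq \|\phi_0 - \psi\|_{C(G)} \|v\|$, valid because $\pi$ is unitary and the Haar measure is normalized, then gives $\|T_\psi v - v\| < \varepsilon$ with $T_\psi v \in \bigoplus_\tau H_\tau$. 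The main delicate point is the change-of-variables verification in the structural step, where one must respect the paper's matrix-coefficient convention $\lambda(\tau(g^{-1}) v)$ to obtain $G$-equivariance on the nose rather than its twist by the inverse; everything else is a routine approximate-identity argument combined with Peter-Weyl density.
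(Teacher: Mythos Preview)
Your argument is correct, and it takes a genuinely different route from the paper's. The paper establishes density by showing the orthogonal complement of $\sum_\tau H_\tau$ is trivial: given $v \perp H_\tau$ for all $\tau$, it forms the class function $f(x) = \int_G \langle \pi(x)\pi(g)v, \pi(g)v \rangle\, dg$, computes $(f, \chi_\tau)_{L^2} = (\dim\tau)^{-1}\langle P_\tau v, v\rangle = 0$ via the projection formula (\ref{proj}), and then invokes Corollary \ref{pwt2} (density of characters among class functions) to conclude $f=0$, hence $v=0$. You instead \emph{construct} approximants to $v$ directly: convolution by a matrix coefficient of $(\tau,L)$ lands in $H_{(\tau,L)}$ because the map $c \mapsto T_{\phi_c} v$ is $G$-equivariant, and then an approximate-identity argument combined with the full Peter--Weyl theorem (Theorem \ref{pwt0}) gives density. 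Your approach is more elementary in that it avoids the projection formula and the class-function corollary, and it has the side benefit of explicitly verifying the mutual orthogonality of the $H_\tau$ via Schur's lemma, a point the paper's proof does not address directly. The paper's argument, on the other hand, is shorter once the projection machinery is in place and sidesteps the approximate-identity construction.
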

\begin{proof}
We follow the proof of \cite{KO}, Theorem 4.18 (iii). Suppose that $v \in H$ is orthogonal to $H_{\tau}$ for each $\tau \in \widehat{G}$, and let us show that $v=0$.  First we set
$$
f(x):=\int_G \langle \pi(x)\pi (g) v, \pi(g) v\rangle_H dg.
$$
Note that then $f \in C(G)^{\rm Ad}$ and $f(e)=\langle v,v\rangle_H$. Now for each $\tau \in \widehat{G}$
\beqs
(f, \chi_\tau)_{L^2}
&=&
\int_G 
\left( 
\int_G \langle \pi(x) \pi(g)v, \pi(g)v \rangle_H 
dg
\right) 
\overline{\chi_\tau(x)}
dx
\\
&=&
\int_G 
\left( 
\int_G 
\langle 
\overline{\chi_\tau(x)}
\pi(x) \pi(g)v, \pi(g)v 
\rangle_H 
dx
\right) 
dg
\\
&=&
(\dim \tau)^{-1} \int_G
\langle P_\tau (\pi(g)v), \pi(g)v \rangle_H dg
\\
&=&
(\dim \tau)^{-1} \int_G
\langle \pi(g) P_\tau v, \pi(g)v \rangle_H dg
=
(\dim \tau)^{-1} 
\langle P_\tau v, v \rangle_H =0,
\eeqs
where $P_\tau$ is a projection (\ref{proj}). By the completeness in Corollary \ref{pwt2}, we have $f=0$. Therefore $v=0$, as  claimed.
\end{proof}

Now we suppose that $G$ is connected, and choose an orientation of $G$ and a bi-invariant Riemannian metric on $G$. The corresponding Laplace-Beltrami operator on $C^\infty (G)$ is denoted by $\Delta$. Note that it can be written by $\{X_1, ...X_d\} \subset \mathfrak{g}$, an orthonormal basis of the Lie algebra,  in the form
$$
\Delta=-\sum_{j=1}^d X_j^2.
$$ 
\begin{lem}\label{ME}
For each $(\tau, L) \in \widehat{G}$, there exists an eigenvalue $\lambda_\tau$ of $\Delta$ such that
$$
\widetilde{\Psi}(L \otimes L^*) \subset V_{\lambda_\tau},
$$
where $\widetilde{\Psi}$ is an operator in Corollary \ref{PWT} and $V_{\lambda_\tau}$ is the  corresponding eigenspace of $\Delta$.
\end{lem}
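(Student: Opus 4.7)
The plan is to show that every matrix coefficient of $(\tau,L)$ is an eigenfunction of $\Delta$ with a single common eigenvalue, built from the Casimir element. Fix $v\in L$ and $\lambda \in L^*$, and consider the function
$$
\phi_{v,\lambda}(g):=\lambda(\tau(g^{-1})v),
$$
which is smooth since a continuous finite-dimensional representation of a Lie group is automatically smooth. The image $\widetilde{\Psi}(L\otimes L^*)$ is spanned (up to the scalar $\sqrt{\dim L}$) by such functions, so it suffices to show each $\phi_{v,\lambda}$ lies in a single eigenspace $V_{\lambda_\tau}$ independent of $v$ and $\lambda$.

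The first step is to translate $\Delta$ into an operator on $L$ via $\tau$. For $X\in\mathfrak{g}$, thinking of $X$ as the left-invariant vector field on $G$,
$$
(X\phi_{v,\lambda})(g)=\frac{d}{dt}\Big|_{t=0}\lambda\bigl(\tau(\exp(-tX))\tau(g^{-1})v\bigr)=-\lambda\bigl(d\tau(X)\tau(g^{-1})v\bigr),
$$
and iterating gives $(X^2\phi_{v,\lambda})(g)=\lambda(d\tau(X)^2\tau(g^{-1})v)$. Summing over the orthonormal basis $\{X_1,\dots,X_d\}$ of $\mathfrak{g}$,
$$
(\Delta\phi_{v,\lambda})(g)=\lambda\bigl(C_\tau\,\tau(g^{-1})v\bigr),\qquad C_\tau:=-\sum_{j=1}^d d\tau(X_j)^2\in\mathrm{End}(L).
$$

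The second step is to show $C_\tau$ is a scalar. Because the metric on $G$ is bi-invariant, the induced inner product on $\mathfrak{g}$ is $\mathrm{Ad}$-invariant, hence the element $\sum_j X_j\otimes X_j\in\mathfrak{g}\otimes\mathfrak{g}$ is $\mathrm{Ad}$-invariant, and its image $\sum_j X_j^2$ lies in the center of the universal enveloping algebra $U(\mathfrak{g})$. Consequently $C_\tau$ commutes with $d\tau(Y)$ for every $Y\in\mathfrak{g}$, and since $G$ is connected, with $\tau(g)$ for every $g\in G$. Thus $C_\tau\in\Hom_G(L,L)$, and by Schur's lemma (Lemma \ref{SL1}) there exists $\lambda_\tau\in\C$ with $C_\tau=\lambda_\tau\,\mathrm{id}_L$.

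Plugging this back yields $\Delta\phi_{v,\lambda}=\lambda_\tau\phi_{v,\lambda}$ for every $v\in L$, $\lambda\in L^*$, so $\Psi_{(\tau,L)}(L\otimes L^*)\subset V_{\lambda_\tau}$, and the factor $\sqrt{\dim L}$ in $\widetilde{\Psi}$ is irrelevant. The main subtlety is the centrality of the Casimir, which hinges on the bi-invariance of the chosen metric; once that is in place, Schur's lemma immediately forces the common eigenvalue. A minor point to mention is that $\lambda_\tau$ is actually a nonnegative real number, since $\Delta$ is formally self-adjoint and nonnegative, but this is not needed for the statement.
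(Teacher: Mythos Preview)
Your proof is correct, but it follows a different route from the paper's. The paper argues at the level of the two-sided regular representation: since the metric is bi-invariant, $\Delta$ commutes with the full $G\times G$-action on $L^2(G)$; since $L\otimes L^*$ is irreducible as a $G\times G$-module, Schur's lemma applied to $\Delta|_{\widetilde{\Psi}(L\otimes L^*)}\in\Hom_{G\times G}(L\otimes L^*,L\otimes L^*)$ forces it to be a scalar. You instead compute explicitly that $\Delta$ acts on matrix coefficients through the Casimir operator $C_\tau=-\sum_j d\tau(X_j)^2\in\mathrm{End}(L)$, invoke centrality of the Casimir in $U(\mathfrak g)$ (again from bi-invariance), and apply Schur's lemma to the one-sided $G$-module $L$. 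The paper's argument is shorter and more conceptual; yours is more explicit and has the advantage of identifying $\lambda_\tau$ concretely as the Casimir eigenvalue on $L$, which is exactly the content the paper later unpacks in Remark~\ref{irr-eig} and in the proof of Lemma~\ref{DeltaG}.
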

\begin{proof}
Since the metric on $G$ is bi-invariant, $\Delta$ commutes with both left and right translations. Therefore it commutes with the regular representation of $G$. Since $L\otimes L^*$ is irreducible with respect to the $G\times G$-action, it follows from Lemma \ref{SL1} that there is $\lambda_\tau \in \C$ such that $\Delta|_{\widetilde{\Psi}(L\otimes L^*)}=\lambda_\tau \id$. Hence we have $\widetilde{\Psi}(L\otimes L^*) \subset V_{\lambda_\tau}$.
\end{proof}
\begin{rmk} \label{irr-eig}
The left regular representation induces a $G$-action on $L\otimes L^*$ via $\widetilde{\Psi}$. Choosing a basis of $L$, we have a $G$-isomorphism 
$$
L\otimes L^* \rightarrow \underbrace{L \oplus \cdots \oplus L}_{\dim L^*\ \mathrm{copies}}. 
$$
Via this isomorphism, we can consider the operator $\Delta|_{\widetilde{\Psi}(L)}(=\lambda_\tau \id_{\widetilde{\Psi}(L)})$. This operator will be reconsidered in a proof of Lemma \ref{DeltaG}.
\end{rmk}

\begin{nota} \label{notation}
We write $\widehat{G}=\{(\tau_\alpha, L_\alpha)\}_{\alpha \in \N}$ so that $\widetilde{\lambda}_1 \leq \widetilde{\lambda}_2\leq\cdots\leq \widetilde{\lambda}_\alpha\leq\cdots$, where $\widetilde{\lambda}_\alpha$ is the eigenvalue of $\Delta$ corresponding to the irreducible representation $(\tau_\alpha, L_\alpha)$. 
The reason why we do not use $\lambda_\alpha$ but $\widetilde{\lambda}_\alpha$ is to distinguish them with the eigenvalues of $\Delta$ taken into account of their multiplicities, i.e. $\lambda_1<\lambda_2<\cdots<\lambda_\alpha<\cdots$. For each $(\tau_\alpha, L_\alpha)$, we write $d_\alpha$ and $\chi_\alpha$ for its dimension and character, respectively. 

If we consider the basis $\{\psi_j\}_{j\in\N}$ of $L^2(G)$ consisting of eigenfunction of $\Delta$ as in Corollary \ref{WAF3}, we denote for each $\psi_j$ the corresponding eigenvalue by $\widehat{\lambda_j}$. This notation is used throughout this paper. Be careful not to confuse $\lambda_\alpha$, $\widetilde{\lambda_\alpha}$ and $\widehat{\lambda_\alpha}$.
\end{nota}
\begin{prop}\label{prop:1.1}
The series \ 
$\displaystyle 
{\sum_{\alpha=1}^{\infty} \frac{d_{\alpha}^2}{(1+\widetilde{\lambda}_\alpha)^{s}}}$
converges if and only if $s>d/2$.
\end{prop}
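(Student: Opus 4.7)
The plan is to recognize the series as a spectral zeta-type sum for the Laplace--Beltrami operator $\Delta$ on $G$, then to apply the Weyl asymptotics already recorded in Corollary \ref{WAF3}. Since $G$ is a connected compact oriented Riemannian manifold of dimension $d$ and $\Delta$ is a formally self-adjoint, nonnegative, order-$2$ elliptic operator whose principal symbol $\sigma(\Delta)_\xi = |\xi|^2$ is positive definite on $T_1^*G$, Lemma \ref{EO} and Corollary \ref{WAF3} apply directly to $\Delta$ with $n = d$ and $2m = 2$.

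The first step is bookkeeping. By Corollary \ref{PWT} the operator $\widetilde{\Psi}$ is a unitary isomorphism $\overline{\bigoplus_\alpha L_\alpha \otimes L_\alpha^*} \to L^2(G)$, and by Lemma \ref{ME} each summand satisfies $\widetilde{\Psi}(L_\alpha \otimes L_\alpha^*) \subset V_{\widetilde{\lambda}_\alpha}$. Because distinct eigenspaces of $\Delta$ are $L^2$-orthogonal and each $V_{\lambda_k}$ is finite dimensional by Lemma \ref{EO}(ii), the only way the images can exhaust $L^2(G)$ under the unitary $\widetilde{\Psi}$ is to have
\[
V_{\lambda_k} \;=\; \bigoplus_{\alpha : \widetilde{\lambda}_\alpha = \lambda_k} \widetilde{\Psi}(L_\alpha \otimes L_\alpha^*),
\qquad
\dim V_{\lambda_k} \;=\; \sum_{\alpha : \widetilde{\lambda}_\alpha = \lambda_k} d_\alpha^2.
\]
Collecting terms and then rewriting the eigenvalues with multiplicity via the orthonormal basis $\{\psi_j\}$ from Lemma \ref{EO}(iii) gives
\[
\sum_{\alpha=1}^{\infty} \frac{d_\alpha^2}{(1+\widetilde{\lambda}_\alpha)^s}
\;=\; \sum_k \frac{\dim V_{\lambda_k}}{(1+\lambda_k)^s}
\;=\; \sum_{j=1}^\infty \frac{1}{(1+\widehat{\lambda}_j)^s}.
\]

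The second step is asymptotic. By Corollary \ref{WAF3} applied to $\Delta$ with $n = d$, $2m = 2$, there is a constant $C_2 > 0$ with $\widehat{\lambda}_j \sim C_2\, j^{2/d}$ as $j \to \infty$. Hence $(1+\widehat{\lambda}_j)^{-s}$ is bounded above and below by positive constant multiples of $j^{-2s/d}$ for $j$ large, and an integral comparison gives convergence of the rewritten series if and only if $2s/d > 1$, i.e.\ $s > d/2$.

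The only nonroutine point is the first step: verifying that the Peter--Weyl decomposition is compatible with the eigenspace decomposition of $\Delta$ to the extent that different isotypic blocks $L_\alpha \otimes L_\alpha^*$ sharing a common eigenvalue fill out the entire eigenspace. Once finite-dimensionality of each $V_{\lambda_k}$ and the unitarity of $\widetilde{\Psi}$ are invoked this is immediate, and the remainder of the proof is a direct consequence of the Weyl law for the scalar Laplacian on $G$.
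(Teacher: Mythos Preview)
Your proof is correct and follows essentially the same route as the paper: rewrite $\sum_\alpha d_\alpha^2(1+\widetilde{\lambda}_\alpha)^{-s}$ as the eigenvalue sum $\sum_j (1+\widehat{\lambda}_j)^{-s}$ and then apply the Weyl asymptotics of Corollary~\ref{WAF3} with $n=d$, $2m=2$. The only difference is that you justify the first equality carefully via Peter--Weyl (Corollary~\ref{PWT}) and Lemma~\ref{ME}, whereas the paper simply asserts it.
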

\begin{proof}
Let $\widehat{\lambda_1} \leq \widehat{\lambda_2} \leq \cdots \leq \widehat{\lambda_k} \leq \cdots $ be the eigenvalue of $\Delta$ not taken into account of their multiplicities. Then we have 
$$
\sum_{\alpha=1}^{\infty} \frac{d_{\alpha}^2}{(1+\widetilde{\lambda}_\alpha)^{s}}
=
\sum_{k=1}^{\infty} \frac{1}{(1+\widehat{\lambda_k})^{s}}.
$$
By the asymptotic formula (\ref{WAF2}), we have
$$
\sum_{k=1}^{\infty} \frac{1}{(1+\widehat{\lambda_k})^{s}}
=
C \sum_{k=1}^{\infty} \frac{1}{(1+k^{2/d})^{s}}
=
C' \sum_{k=1}^{\infty} \frac{1}{(1+k)^{2s/d}}.
$$
Therefore $\ \displaystyle \sum_{\alpha=1}^{\infty} \frac{d_{\alpha}^2}{(1+\widetilde{\lambda}_\alpha)^{s}}
$
converges if and only if $s>d/2$, as claimed.
\end{proof}

Throughout this paper we write $\D'(G)$ for the set of distributions on $G$. If $u \in \D'(G)$, then $u$ is a continuous linear functional on $C^\infty(G)$ and  for each $\varphi \in C^\infty(G)$ we denote the corresponding value by $\langle u, \varphi \rangle _{\D'(G)}$.
A canonical embedding $L^1(G)\hookrightarrow \D '(G)$ is given by 
$$
\langle f, \varphi \rangle _{\D'(G)}=\int_G f(g) \varphi(g) dg, \hsp f \in L^1(G),\  \varphi \in C^\infty(G).
$$

Let $(\pi, H)$ be a representation of $G$. Note that we do not assume that $\dim H$ is finite. Now let us consider the existance of its character in the distributional sense. To do this, we first consider the map 
$\widetilde{\pi}:C^\infty(G) \rightarrow B(H)$ 
defined by
\begin{eqnarray}
\widetilde{\pi} (\varphi) = \int _G \varphi(g) \pi(g) dg, \hsp \varphi \in C^\infty (G),
\end{eqnarray}
where $B(H)$ denotes the set of bounded operators on $H$. Observe that if $\dim H <\infty$, then
\begin{eqnarray}\label{eq:1.5}
\tr [\widetilde{\pi}(\varphi)]
=
\int \tr \pi(g) \varphi(g) dg
=
\int \chi(g)\varphi(g)dg
=
\langle \chi, \varphi \rangle_{\D'(G)},
\end{eqnarray}
where $\chi \in C^\infty(G) \subset \D'(G)$ is the character of $(\pi, H)$. The definition of a distributional character is based on this formula. 
\begin{thm}\label{thm:1.1}
Let $\pi : G \rightarrow GL_{\C} (H)$ be a representation of $G$.  For each $\alpha \in \N$, we denote by $n_\alpha$ the multiplicity of the irreducible representation $(\tau_\alpha, L_\alpha)$ in $H$.
Then the following conditions are equivalent:
\begin{enumerate}
\item $\widetilde{\pi}(\varphi)$ is trace-class for each $\varphi \in C^\infty(G)$,
\item $\widetilde{\pi}(\varphi) $ is trace-class for each $\varphi \in C^\infty(G)$ and the functional $\varphi \mapsto \tr  [\widetilde{\pi}(\varphi)] $ is continuous on $C^{\infty}(G)$. Hence it defines a distribution $\chi \in \mathcal {D}'(G)$ such that $\tr [\widetilde{\pi}(\varphi)] =\langle\chi,\varphi \rangle_{\D'(G)}$,
\item $n_\alpha<\infty$ for each $\alpha \in \N $ and the  series 
\begin{eqnarray}\label{eq:1.7}
\sum_{\alpha=1}^{\infty}n_\alpha \chi_\alpha
\end{eqnarray}
converges weakly to a distribution $\chi \in \mathcal{D}'(G)$,
where $\chi_\alpha$ is the character of $(\tau_\alpha, L_\alpha)$,
\item there is an $s\in \mathbb{R}$ such that 
\begin{eqnarray}\label{eq:1.8}
\sum_{\alpha=1}^{\infty} \frac{n_\alpha^2}{(1+ \widetilde{\lambda}_{\alpha})^{s}}<\infty,
\end{eqnarray}
where $ \widetilde{\lambda}_{\alpha}$ is the eigenvalue of the Laplacian $\Delta$ corresponding to $(\tau_\alpha, L_\alpha)$,
\item there are $s\in  \mathbb{R}$ and $C>0$ such that $$n_\alpha \leq C(1+ \widetilde{\lambda}_{\alpha})^s.$$ 
\end{enumerate}
Moreover, the distributions $\chi$ defined in {\rm(ii)} and {\rm(iii)} are identical. 
\end{thm}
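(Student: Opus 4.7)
The plan is to prove the cycle (i) $\Leftrightarrow$ (ii) $\Rightarrow$ (iii) $\Rightarrow$ (v) $\Leftrightarrow$ (iv) $\Rightarrow$ (i), with the identification of the two distributions coming out for free once the key identity $\tr\widetilde\pi(\varphi)=\sum_\alpha n_\alpha\int_G\varphi\,\chi_\alpha\,dg$ is in hand. The implication (ii)$\Rightarrow$(i) is tautological. For (i)$\Rightarrow$(ii) I would apply the closed graph theorem to $\widetilde\pi\colon C^\infty(G)\to\mathcal{L}^1(H)$: the map is already continuous into $B(H)$ (since $\|\widetilde\pi(\varphi)\|_{\mathrm{op}}\leq\|\varphi\|_{L^1}$), and the trace norm dominates the operator norm, so the graph is closed. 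Continuity into the Banach space $\mathcal{L}^1(H)$ from the Fr\'echet space $C^\infty(G)$ then produces $k,C$ with $\|\widetilde\pi(\varphi)\|_{\mathcal{L}^1}\leq C\|\varphi\|_{C^k}$, which makes $\varphi\mapsto\tr\widetilde\pi(\varphi)$ a distribution.

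For (ii)$\Rightarrow$(iii) I would insert $\varphi=\overline{\chi_\alpha}$ into the projection formula \eqref{proj}, obtaining $\widetilde\pi(\overline{\chi_\alpha})=d_\alpha^{-1}P_{\tau_\alpha}$, hence $n_\alpha=d_\alpha^{-1}\dim H_{\tau_\alpha}=\tr\widetilde\pi(\overline{\chi_\alpha})<\infty$. To identify the full series, decompose $H=\overline{\bigoplus_\alpha H_{\tau_\alpha}}$ via Corollary \ref{pwt3}, with each isotypic summand $G$-equivariantly isomorphic to $L_\alpha\otimes\mathbb{C}^{n_\alpha}$; then $\widetilde\pi(\varphi)$ preserves every $H_{\tau_\alpha}$ and restricts there as $\widetilde\tau_\alpha(\varphi)\otimes\mathrm{id}_{\mathbb{C}^{n_\alpha}}$, so $\tr(\widetilde\pi(\varphi)|_{H_{\tau_\alpha}})=n_\alpha\int_G\varphi\,\chi_\alpha\,dg$. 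Summing in $\alpha$, with the interchange of trace and sum justified by the $\mathcal{L}^1$-bound just produced, gives $\langle\chi,\varphi\rangle_{\D'(G)}=\sum_\alpha n_\alpha\int_G\chi_\alpha\varphi\,dg$, which is exactly the weak convergence of (iii) and simultaneously identifies the distributions in (ii) and (iii).

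For (iii)$\Rightarrow$(v) I would invoke that every distribution on the compact manifold $G$ has finite order, so some $k,C$ satisfy $|\langle\chi,\psi\rangle_{\D'(G)}|\leq C\|\psi\|_{C^k}$. Plugging in $\psi=\overline{\chi_\alpha}$: orthogonality of characters collapses the series on the left to $n_\alpha$, while $\Delta\chi_\alpha=\widetilde\lambda_\alpha\chi_\alpha$ together with $\|\chi_\alpha\|_{L^2}=1$ and Sobolev embedding on $G$ produces $\|\overline{\chi_\alpha}\|_{C^k}\leq C'(1+\widetilde\lambda_\alpha)^m$ for some $m=m(k,\dim G)$. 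This gives the polynomial bound $n_\alpha\leq C''(1+\widetilde\lambda_\alpha)^m$ of (v). The equivalence (iv)$\Leftrightarrow$(v) is then a clean application of Proposition \ref{prop:1.1}: (v) implies $n_\alpha^2\leq C^2(1+\widetilde\lambda_\alpha)^{2m}\leq C^2 d_\alpha^2(1+\widetilde\lambda_\alpha)^{2m}$, so the series in (iv) is dominated for $s$ large by the convergent sum $C^2\sum_\alpha d_\alpha^2(1+\widetilde\lambda_\alpha)^{2m-s}$; conversely (iv) trivially forces each term to be bounded, yielding (v).

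Finally, for (v)$\Rightarrow$(i), I would reuse the decomposition of the second paragraph: $\|\widetilde\pi(\varphi)|_{H_{\tau_\alpha}}\|_{\mathcal{L}^1}=n_\alpha\|\widetilde\tau_\alpha(\varphi)\|_{\mathcal{L}^1}\leq n_\alpha\sqrt{d_\alpha}\|\widetilde\tau_\alpha(\varphi)\|_{\mathrm{HS}}$. Because every matrix coefficient of $\tau_\alpha$ lies in $V_{\widetilde\lambda_\alpha}$ by Lemma \ref{ME}, one has $\widetilde\tau_\alpha(\Delta^N\varphi)=\widetilde\lambda_\alpha^N\widetilde\tau_\alpha(\varphi)$, so smoothness of $\varphi$ yields rapid decay $\|\widetilde\tau_\alpha(\varphi)\|_{\mathrm{HS}}\leq C_N(1+\widetilde\lambda_\alpha)^{-N}\|\Delta^N\varphi\|_{L^2}$ for every $N$; combining with the polynomial bound (v) and Proposition \ref{prop:1.1} (via Cauchy--Schwarz to handle $\sum_\alpha\sqrt{d_\alpha}(1+\widetilde\lambda_\alpha)^{s-N}$) makes the trace-norm series converge, so $\widetilde\pi(\varphi)$ is trace-class. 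The main obstacle I expect is precisely this rapid-decay estimate for $\|\widetilde\tau_\alpha(\varphi)\|_{\mathrm{HS}}$ in terms of $\varphi$ and its careful pairing with the polynomial bounds on $n_\alpha$ and $d_\alpha$; threading the right Sobolev and polynomial exponents through Proposition \ref{prop:1.1} is what simultaneously closes both (iii)$\Rightarrow$(v) and (v)$\Rightarrow$(i) and forms the technical heart of the argument.
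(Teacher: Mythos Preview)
Your proposal is correct and closes the full cycle, but it departs from the paper's proof at three points, and it is worth recording the differences.

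\textbf{(i)$\Rightarrow$(ii).} You apply the closed graph theorem to $\widetilde\pi\colon C^\infty(G)\to\mathcal{L}^1(H)$, using that convergence in trace norm implies convergence in operator norm and that $\widetilde\pi$ is already continuous into $B(H)$. The paper instead first proves (i)$\Rightarrow$(iii) and then identifies $\tr\widetilde\pi(\varphi)$ with the distribution $\sum_\alpha n_\alpha\chi_\alpha$ coming from (iii). Your route is quicker and avoids needing (iii) as an intermediate; the paper's route has the virtue of delivering the identification of the two distributions in (ii) and (iii) in the same breath.

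\textbf{Finiteness of $n_\alpha$.} You read this off from $\widetilde\pi(\overline{\chi_\alpha})=d_\alpha^{-1}P_{\tau_\alpha}$: a trace-class projection has finite rank. The paper argues by contradiction, observing that if $n_\alpha=\infty$ the eigenvalues of $\widetilde\pi(\varphi)|_{H_\alpha}$ would all have infinite multiplicity, forcing them to vanish for every $\varphi$. Your argument is shorter.

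\textbf{(iii)$\Rightarrow$(v).} You use that a distribution on the compact manifold $G$ has finite order, test against $\overline{\chi_\alpha}$ (orthogonality collapses the series to $n_\alpha$), and bound $\|\chi_\alpha\|_{C^k}$ by Sobolev embedding and the eigenfunction property $\Delta\chi_\alpha=\widetilde\lambda_\alpha\chi_\alpha$. The paper works instead with the partial sums $S_N$: weak convergence plus the uniform boundedness principle (reduced locally to $\mathbb T^n$) gives a uniform bound on $\|S_N\|_{L^2_{-s}}$ for large $s$, and then the isometry $(1+\Delta)^{-s/2}$ and orthonormality of $\{\chi_\alpha\}$ yield (iv) directly. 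Your argument is more self-contained on $G$; the paper's argument lands on (iv) rather than (v) and is closer in spirit to Sobolev-space manipulations.

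\textbf{(v)$\Rightarrow$(i).} Here the two arguments coincide in substance: both estimate the matrix entries $\widetilde\pi^\alpha_{ij}(\varphi)$ by moving $(1+\Delta)^N$ onto $\varphi$ via Lemma~\ref{ME}, invoke Schur orthogonality for $\|\pi^\alpha_{ij}\|_{L^2}^2=d_\alpha^{-1}$, and sum using Proposition~\ref{prop:1.1}. You use the sharper $\|A\|_1\le\sqrt{d}\,\|A\|_2$ where the paper uses $\|A\|_1\le d\,\|A\|$, which only changes the exponents threaded through and not the conclusion.
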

\begin{defi}\label{defi:1.1}
If a representation $(\pi,H)$ satisfies one of the conditions in Theorem \ref{thm:1.1}, we say $(\pi,H)$ has a distributional  character and denote the distribution $\chi$ by $ch(H)$.
\end{defi}
\begin{proof}[Proof of Theorem \ref{thm:1.1}] We follow the proof of Theorem 1.1 in \cite{Shu84}.
First we assume (i) and prove (iii). We decompose
\begin{eqnarray}\label{eq:1.4}
H=\bigoplus_{\alpha \in \N} H_\alpha,
\end{eqnarray}
where $H_\alpha$ is the $(\tau_\alpha, L_\alpha)$-component in $H$. If we assume that $n_\alpha=\infty$ for some $\alpha$, we can reach a contradiction as follows.  Let $\{\lambda(\varphi)_j\}_{j=1}^{d_\alpha}$ be  the eigenvalues of $ \widetilde{\pi}(\varphi) |_{L_\alpha}$. If we regard them as the eigenvalues of $ \widetilde{\pi}(\varphi) |_{H_\alpha}$, then all the multiplicities are infinity. Since $ \widetilde{\pi}(\varphi) $ is trace-class, which cannot possess infinitely many identical nonzero eigenvalues, $\lambda(\varphi)_j$ must be zero for all $j$ and $\varphi$ . This contradicts the fact that $\pi(g)$ is unitary for all $g\in G$. Hence we can conclude that $n_\alpha$ is finite for each $\alpha$. Next we consider the convergence of the series (\ref{eq:1.7}). Since $n_\alpha$ is finite, we can write  
$$
H_\alpha = \underbrace{L_\alpha \oplus \cdots \oplus L_\alpha}_{n_\alpha\  \mathrm{copies}}
$$
for each $\alpha$. Therefore we have
$$
\tr [ \widetilde{\pi}(\varphi) |_{H_\alpha}]
=
\int_G \varphi(g) \tr \left[\pi(g)|_{H_\alpha}\right] dg
=
\int_G  n_\alpha \chi_\alpha(g) \varphi(g) dg
=
n_\alpha \langle \chi_\alpha, \varphi \rangle_{\D'(G)}
$$
for each $\varphi \in C^\infty(G)$ and $\alpha$. This shows 
$$
\sum_{\alpha=1}^{\infty}
\left|
n_\alpha \langle \chi_\alpha, \varphi \rangle_{\D'(G)}
\right|
=
\sum_{\alpha=1}^{\infty}
\left|
\tr [ \widetilde{\pi}(\varphi) |_{H_\alpha}]
\right|.
$$
Here the right term is finite by the assumption that $\widetilde{\pi}(\varphi)$ is trace-class. This shows the weak convergence of the series (\ref{eq:1.7}) in $\D'(G)$ and hence (iii) is proved.

Next we prove the equivalence of (iii), (iv) and (v). The fact that (iv) and (v) are equivalent immediately follows from Proposition \ref{prop:1.1}.  We now check the equivalence of (iii) and (iv). Suppose (iii) and set  
$$
S_N=\sum_{\alpha=1}^N n_\alpha \chi_\alpha.
$$
First we show that  $\{S_N\}_{N \in \N}$ is bounded in $L^2_{-s}(G)$ for some large $s$. To do this, it suffices to prove that in the case $G=\mathbb{T}^n$, the n-dimensional torus, for then we can generalize the results by using a partition of unity on $G$. Since $S_N$ converges weakly in $\mathcal{D}'(G)$,  it is weakly bounded, i.e., for each $\varphi \in C^\infty (G)$,
$$
\sup_N |\langle S_N, \varphi \rangle_{\D'(G)}| < \infty.
$$
If we consider $S_N$ as the bounded linear functional on $L^2(G)$, it follows from the uniform boundedness principle that  there exists a constant $C>0$ such that
$$
\sup_N \|S_N\| \leq C,
$$
where $\| \cdot \|$ denotes the operator norm. Thus the Fourier coefficients of $S_N$ are all bounded by $C$, which is independent of $N$. This shows the boundedness of  $\|S_N\|_{-s}$ for large $s$, as asserted. Furthermore, since the operator $(1+\Delta)^{-s/2}$ is an isometry from $L^2_{-s}(G)$ to $L^2(G)$, the boundedness of $S_N$ in $L^2_{-s}(G)$ is equivalent to the boundedness of 
$$
(1+\Delta)^{-s/2}S_N=\sum_{\alpha=1}^N \frac{n_\alpha}{(1+ \widetilde{\lambda}_{\alpha})^{s/2}}\ \chi_\alpha
$$
in $L^2(G)$. Moreover, since the system $\{\chi_\alpha\}_{\alpha=1}^\infty$ is orthonormal, this is equivalent to the condition that
$$
\sup_N\sum_{\alpha=1}^N \frac{n_\alpha^2}{(1+ \widetilde{\lambda}_{\alpha})^{s}}<\infty,
$$
which is same to (\ref{eq:1.8}). Therefore, we have shown that (iii) implies (iv). Conversely if we suppose (iv), then
the following series converges in $L^2(G)$:
$$
\sum_{\alpha=1}^\infty \frac{n_\alpha}{(1+ \widetilde{\lambda}_{\alpha})^{s/2}}\ \chi_\alpha.
$$
Applying the operator $(1+\Delta)^{s/2}$ to this, the series (\ref{eq:1.7}) converges in $\mathcal{D}'(G)$. Therefore (iv) is proved and hence (iii), (iv), (v) are equivalent.

We next suppose (i) and prove (ii). Recall that we have already verified that (i) implies (iii). Moreover, using the decomposition (\ref{eq:1.4}), it follows from the assumption (i) that 
$$
\tr  [\widetilde{\pi}(\varphi)] 
=
\sum_{\alpha=1}^\infty \tr  [\widetilde{\pi}(\varphi)] |_{H_\alpha}
=\sum_{\alpha=1}^\infty n_\alpha \langle \chi, \varphi \rangle_{\D'(G)} = \langle \chi, \varphi \rangle_{\D'(G)},
$$
where 
$$
\chi=\sum_{\alpha=1}^\infty n_\alpha \chi_\alpha .
$$
Therefore the functional $\varphi \mapsto \tr  [\widetilde{\pi}(\varphi)] $ coincides with the distribution $\chi \in \mathcal{D}'(G)$ in (iii). This shows that (i) implies (ii), as claimed. Consequently, (i) and (ii) are equivalent.

Finally, we prove that (iii) implies (i). From now on, $\| A \|$, $\| A \|_1=\tr \sqrt{A^*A}$, and $\| A \|_2=[\tr (A^*A)]^{1/2}$ denote respectively the operator, trace, and Hilbert-Schmidt norm on $H$, where $A$ is an operator on $H$. Recall that the following relation holds between them:
\begin{eqnarray}\label{eq:1.9}
\| A \| \leq \| A \|_2 \leq \| A \|_1 \leq d\| A \|, \hsp d=\dim H.
\end{eqnarray}
To show (i), it suffices to prove that $\| \widetilde{\pi}(\varphi)\|_1$ is finite for each $\varphi \in C^\infty(G) $. In view of the decomposition (\ref{eq:1.4}) we can write
\begin{eqnarray}\label{eq:1.10}
\|  \widetilde{\pi}(\varphi)  \|_1=\sum_{\alpha=1}^\infty \| \widetilde{\pi}(\varphi) |_{H_\alpha}\|_1.
\end{eqnarray}
By the inequality (\ref{eq:1.9}), we have
\begin{eqnarray}\label{eq:1.11}
\| 
\widetilde{\pi}(\varphi) |_{H_\alpha}
\|_1 
\leq
d_\alpha n_\alpha 
\| \widetilde{\pi}(\varphi) |_{H_\alpha}\|.
\end{eqnarray}Using the inequality (\ref{eq:1.9}) again, we have
\begin{eqnarray}\label{eq:1.11.2}
\| 
 \widetilde{\pi}(\varphi) |_{H_\alpha} 
\| 
= 
\| 
 \widetilde{\pi}(\varphi) |_{L_\alpha} 
\| 
\leq 
\| 
\widetilde{\pi}(\varphi) |_{L_\alpha} 
\|_2 
=
\sqrt{
\sum_{i,j=1}^{d_\alpha}
| \widetilde{\pi}_{ij}^\alpha (\varphi)|^2
},
\end{eqnarray}
where $\{ \widetilde{\pi}_{ij}^\alpha (\varphi) \}_{i,j=1}^{d_\alpha}$ are the matrix elements of the operator $ \widetilde{\pi}(\varphi) $ in $L_\alpha$. Notice that using the matrix element $\pi_{ij}^{\alpha}(g)$ of the representation $(\tau_\alpha, L_\alpha)$, we can write 
\begin{eqnarray}\label{eq:1.12}
\widetilde{\pi}_{ij}^\alpha(\varphi)
=
\int_G \varphi (g) \pi_{ij}^\alpha(g)dg.
\end{eqnarray} 
By Lemma \ref{ME}, we have
$$
\int_G \varphi (g) \pi_{ij}^\alpha(g)dg
=
(1+ \widetilde{\lambda}_{\alpha})^{-t}\int_G \varphi(g) [(1+\Delta)^t\pi_{ij}^\alpha(g)]dg
$$
Integrating by parts, we have
\begin{eqnarray*}
(1+ \widetilde{\lambda}_{\alpha})^{-t}\int_G \varphi(g) [(1+\Delta)^t \pi_{ij}^\alpha(g)]dg
=
(1+ \widetilde{\lambda}_{\alpha})^{-t}\int_G[(1+\Delta)^t\varphi (g)]\pi_{ij}^\alpha(g)dg
\end{eqnarray*}
Therefore, we have
\begin{eqnarray*}
|
\widetilde{\pi}_{ij}^\alpha(\varphi)
|^2
&=&
\frac{1}{(1+ \widetilde{\lambda}_{\alpha})^{2t}}
\left|
\int_G[(1+\Delta)^t\varphi (g)]\pi_{ij}^\alpha(g)dg
\right|^2
\\
&=&
\frac{1}{(1+ \widetilde{\lambda}_{\alpha})^{2t}}
\left|
((1+\Delta)^t\varphi,\ \pi_{ij}^\alpha)_{L^2}
\right|^2
\end{eqnarray*}
By the Cauchy-Schwarz inequality, we have 
$$
\frac{1}{(1+ \widetilde{\lambda}_{\alpha})^{2t}}
\left|
((1+\Delta)^t\varphi,\ \pi_{ij}^\alpha)_{L^2}
\right|^2
\leq
\frac{1}{(1+ \widetilde{\lambda}_{\alpha})^{2t}}
\|
(1+\Delta)^t\varphi
\|_{L^2}^2
\ 
\| 
\pi_{ij}^\alpha
\|_{L^2}^2.
$$
Since 
$\varphi \in C^\infty(G)$, 
$
\|
(1+\Delta)^t\varphi
\|_{L^2}^2
$
is finite for each $\varphi$. Moreover by Lemma \ref{SL2},  we have 
$\| 
\pi_{ij}^\alpha
\|_{L^2(G)}^2
\leq
d_\alpha^{-1}. 
$  
Consequently,
$$
|\widetilde{\pi}_{ij}^\alpha(\varphi)|^2 
\leq
\frac{C_{\varphi,t}}{d_\alpha(1+ \widetilde{\lambda}_{\alpha})^{2t}}, 
$$
where $C_{\varphi,t}\geq0$ is a constant depend only on $\varphi$ and $t$. Using this estimate and (\ref{eq:1.11.2}), we have 
$$
\| 
 \widetilde{\pi}(\varphi) |_{H_\alpha} 
\|^2 
\leq
\frac{C_{\varphi,t}\ d_\alpha}{(1+ \widetilde{\lambda}_{\alpha})^{2t}}.
$$
By (\ref{eq:1.10}) and (\ref{eq:1.11}), we have 
$$
\| \widetilde{\pi}(\varphi)  \|_1 
\leq 
\sqrt{C_{\varphi, t}} 
\sum_{\alpha=1}^\infty 
\frac{n_\alpha d_\alpha^{3/2}}{(1+ \widetilde{\lambda}_{\alpha})^{t}}.
$$ 
Moreover, it follows from Proposition \ref{prop:1.1} that if $l>d/4$,
$$
d_\alpha \leq \const (1+ \widetilde{\lambda}_{\alpha})^l.
$$
Hence if we choose $t_0$ so that $t_0>3d/8$, it follows from the Cauchy-Schwarz inequality that
\begin{eqnarray*}
\| \widetilde{\pi}(\varphi) \|_1^2
&\leq& 
C'_{\varphi,t}
\left[
\sum_{\alpha=1}^\infty
\frac{n_\alpha}{(1+ \widetilde{\lambda}_{\alpha})^{(t-\frac{3}{2}t_0)}}
\right]^2
\\
&\leq& 
C'_{\varphi,t}
\left[
\sum_{\alpha=1}^\infty 
\frac{n_\alpha^2}{(1+ \widetilde{\lambda}_{\alpha})^{s}}
\right]
\left[
\sum_{\alpha=1}^\infty
\frac{1}{(1+ \widetilde{\lambda}_{\alpha})^{2(t-\frac{3}{2}t_0)-s}}
\right]
\\
&\leq& 
C''_{\varphi, t}
\sum_{\alpha=1}^\infty 
\frac{1}{(1+ \widetilde{\lambda}_{\alpha})^{2(t-\frac{3}{2}t_0)-s}},
\end{eqnarray*}
where $s\in \R$ a constant in (iv), which is equivalent to 
the assumption (iii). If we choose $t$ so that $2(t-\frac{3}{2}t_0)-s>d/2$, it follows from Proposition \ref{prop:1.1} that  the last term in the above inequalities converges. Hence $\|\widetilde{\pi}(\varphi)\|_1$ is finite for each $\varphi \in C^\infty(G)$, as asserted.
\end{proof}


\section{Spectral properties of a transversally elliptic operator}\label{thirdsec}

Let $G$ be a compact Lie group. A $G$-manifold is a manifold with a smooth $G$-action on it. A vector bundle over a $G$-manifold is called a $G$-vector bundle if it is a $G$-manifold such that the action is compatible with the action on the base space and is linear on each fiber. 

Let $E$ be a $G$-vector bundle over an oriented compact $G$-manifold $M$. We can choose a $G$-invariant Riemannian metric on $M$ and a $G$-invariant Hermitian metric on $E$, which we fix throughout this paper. Let $C^\infty (E)$ be the space of differentiable sections of $E$ with the $L^2$-metric
$$
\langle \varphi,\psi\rangle_{L^2}=\int_M \langle \varphi(p),\psi(p)\rangle_{E_p}dM(p), \hspace{10mm}\varphi, \psi \in C^\infty(E)
$$
and  denote its completion by $L^2(E)$. If we set 
$$
[g\cdot \varphi](p)
=
g[\varphi(g^{-1}p)],
\hspace{10mm}
g\in G,\  \varphi \in C^\infty(E),\ p\in M,
$$
then $G$ acts on $C^\infty(E)$ from left unitarily. 

We denote by $\g$ the Lie algebra of $G$. For each $X \in \g$, the induced differential operator $\widehat{X}:C^\infty(E)\rightarrow C^\infty(E)$ is given by
$$
[\widehat{X} \varphi]_p = \left. \frac{d}{dt} \right|_{t=0}  (\exp tX \cdot \varphi)_p, \hsp p \in M , \ \varphi \in C^\infty(E).
$$
If $E$ is a trivial line bundle $M \times \C$, then the induced  differential operator on $ C^\infty(E)=C^\infty(M)$ is  denoted by $\widetilde{X}$, called the associated vector field.
We set
$$ 
T^*_GM:=\{v\in T^*M| v(\widetilde{X}_{\pi(v)})=0 \ \mathrm{for\ each}\  X \in \g\}.
$$

\begin{defi}\label{def:2.1}
A linear differential operator $A:C^\infty(E)\rightarrow C^\infty(E)$ is called transversally elliptic if it satisfies the following two conditions:
\begin{enumerate}
\item $A$  is invariant under the $G$-action on $C^\infty(E)$, 
\item the principal symbol $\sigma(A)$ is invertible on $T^*_G M\backslash0$.
\end{enumerate}
\end{defi}
\begin{example}
If $G$ is finite or $G$ acts trivially on $M$, then a transversally elliptic operators are the $G$-invariant elliptic ones. However, in the case of a transitive action, all $G$-invariant operators are transversally elliptic.  If $G=\mathbb{S}^1$ acts on $M=\mathbb{T}^2$ in the first variable, then transversally elliptic operators are the following:
$$
\sum_{k+l \leq m} a_{kl}(y) \frac{\partial^{k+l}}{\partial x^k \partial y^l},
$$
where $a_{kl} \in C^\infty(\S^1)$ and $m$ is an order of the operator.
\end{example}
\begin{rmk} \label{rmk}
Let $A$ be a transversally elliptic operator. 
\begin{enumerate}
\item An eigenspace of $A$ can be infinite dimensional. For example, consider the operator $A=-\frac{\partial^2}{\partial y^2}$ on $\mathbb{T}^2$ with the $\S^1$-action in the example above. If $\lambda_A$ is an eigenvalue of $A$, then its eigenspace is $L^2(\S^1) \otimes W_{\lambda_A}$, where $W_{\lambda_A}$ is that of the Laplacian on $\S^1$. 

\item The eigenvalues of $A$ can be dense in $\R$. For instance, consider the operator $A=\frac{1}{i} (\frac{\partial}{\partial x}- \sqrt{2} \frac{\partial}{\partial y})$ on $\T^2$ with the $\S^1$-action above. Then its spectrum is $\{\xi_1-\sqrt{2} \xi_2 \ |\  \xi_1,\xi_2 \in \Z \}$, which is dense  in $\R$.
\end{enumerate}
Comparing with Lemma \ref{EO}, the spectral properties of a transversally elliptic operator are different from those of  elliptic operators. However, there are common properties between them, which we will see in Corollary \ref{A'sONB}.
\end{rmk}
From now on, we suppose that $G$ is connected, and  choose an orientation of $G$ and a bi-invariant Riemannian metric on $G$. Let $\{X_1, \cdots, X_d\}$ be an orthonormal basis in $\g$ and we define a linear  differential operator $\Delta_G:C^\infty(E)\rightarrow C^\infty(E)$ by 
$$
\Delta_G=-\sum_{j=1}^d \widehat{X}_j^2.
$$

\begin{lem}\label{PODG}
\begin{enumerate}
\item The principal symbol of $\Delta_G$ is 
$$
\sigma(\Delta_G)_\xi 
=
\sum_{j=1}^d [\xi (\widetilde{X_j})]^2,
\hsp \xi \in T^*_p M.
$$
\item $\Delta_G$ is nonnegative and formally self-adjoint.
\end{enumerate}
\end{lem}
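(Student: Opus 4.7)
The plan is to prove (i) by a direct principal-symbol computation using the Leibniz rule for $\widehat{X}_j$, and to prove (ii) by exploiting the fact that a unitary representation has formally skew-adjoint infinitesimal generators.

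For (i), I first compute the principal symbol of the first-order operator $\widehat{X}_j$ via the characterization (\ref{symbol op}). Because the $G$-action on $E$ is $\C$-linear on fibers and covers the $G$-action on $M$, the generator $\widehat{X}_j$ satisfies the Leibniz rule
$$
\widehat{X}_j(f s) = \widetilde{X_j}(f)\, s + f\, \widehat{X}_j(s), \hsp f \in C^\infty(M),\ s \in C^\infty(E),
$$
where $\widetilde{X_j}(f) = \widehat{X}_j f$ is the derivative along the associated vector field. Taking $f = e^{iuh}$ with $dh_p = \xi$ gives
$$
e^{-iuh}\, \widehat{X}_j\, e^{iuh}(s) = iu\, \widetilde{X_j}(h)\, s + \widehat{X}_j(s),
$$
so $\sigma(\widehat{X}_j)_\xi = i\, \xi(\widetilde{X_j})\,\id_{E_p}$. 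Since the principal symbol is multiplicative on compositions,
$$
\sigma(\widehat{X}_j^2)_\xi = \bigl(i\, \xi(\widetilde{X_j})\bigr)^2\,\id_{E_p} = -[\xi(\widetilde{X_j})]^2\,\id_{E_p},
$$
and therefore $\sigma(\Delta_G)_\xi = -\sum_j \sigma(\widehat{X}_j^2)_\xi = \sum_{j=1}^d [\xi(\widetilde{X_j})]^2\,\id_{E_p}$, proving (i).

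For (ii), the key observation is that the $G$-action on $L^2(E)$ is unitary, thanks to the $G$-invariance of the Riemannian metric on $M$ and the Hermitian metric on $E$. Differentiating the unitarity relation $\langle \exp(tX_j)\cdot \varphi,\ \exp(tX_j)\cdot \psi \rangle_{L^2} = \langle \varphi, \psi \rangle_{L^2}$ at $t=0$ yields
$$
\langle \widehat{X}_j \varphi,\ \psi \rangle_{L^2} + \langle \varphi,\ \widehat{X}_j \psi \rangle_{L^2} = 0,
$$
so $\widehat{X}_j$ is formally skew-adjoint. Consequently $(-\widehat{X}_j^2)^* = -(\widehat{X}_j^*)^2 = -\widehat{X}_j^2$, and
$$
\langle -\widehat{X}_j^2\, \varphi,\ \varphi \rangle_{L^2} = \langle \widehat{X}_j \varphi,\ \widehat{X}_j \varphi \rangle_{L^2} = \| \widehat{X}_j \varphi\|_{L^2}^2 \geq 0.
$$
Summing over $j = 1, \ldots, d$ shows that $\Delta_G$ is formally self-adjoint and nonnegative, proving (ii).

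The main step requiring care is the symbol computation in (i), specifically verifying the Leibniz-type identity for $\widehat{X}_j$ on sections of a possibly nontrivial $G$-bundle $E$; this relies on the fiberwise $\C$-linearity of the $G$-action together with its compatibility with the projection to $M$. Once that identity is in hand, both parts reduce to formal manipulations: multiplicativity of the principal symbol for (i), and differentiation of the unitary one-parameter groups $\exp(t\widehat{X}_j)$ on $L^2(E)$ for (ii).
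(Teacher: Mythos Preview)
Your proof is correct and follows essentially the same approach as the paper: both compute $\sigma(\widehat{X}_j)_\xi = i\,\xi(\widetilde{X_j})$ via the characterization (\ref{symbol op}) and then use multiplicativity of the principal symbol for (i), and both establish skew-adjointness of $\widehat{X}_j$ to deduce (ii). The only cosmetic differences are that the paper carries out the symbol computation directly from the definition of $\widehat{X}_j$ rather than isolating a Leibniz rule, and it phrases the skew-adjointness of $\widehat{X}_j$ as ``integration by parts'' rather than as differentiation of the unitarity of the $G$-action---but these are the same facts expressed two ways.
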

\begin{proof}
We follow the proof of Lemma 2.2 in \cite{Shu84}. (i) We first calculate $\sigma(\widehat{X_j})_\xi$ for each $\xi \in T^*_p M$ using the the operator (\ref{symbol op}).  Set $g_t:=\exp tX$. For each $\varphi \in C^\infty(E)$, $p \in M$ and $f \in C^\infty(M)$ satisfying $df_p=\xi$, we have 
\beqs
[e^{-iuf(p)}(\widehat{X_j} e^{iuf} \varphi)]_p
&=&
e^{-iuf(p)}
\left. \frac{d}{dt} \right|_{t=0} g_t [e^{iuf(g_t^{-1}p)}\varphi(g_t^{-1}p)]
\\
&=&
e^{-iuf(p)}
\left. \frac{d}{dt} \right|_{t=0} e^{iuf(g_t^{-1}p)}\  g_t [\varphi(g_t^{-1}p)]
\\
&=&
iu\ 
[\widetilde{X_j} f]_p [\varphi(p)]+r
\\
&=&
iu [(df)_p(\widetilde{X_j})] [\varphi(p)]+r
=
iu [\xi(\widetilde{X}_j)] [\varphi (p)]+r,
\eeqs
where $r$ is the term not containing $u$. This shows  
$
\sigma(\widehat{X_j})_\xi 
=i \xi(\widetilde{X_j}).
$
Therefore we have 
\beqs
\sigma(\Delta_G)_\xi
=
\sigma
\left(
-\sum_{j=1}^d \widehat{X_j}^2
\right)_\xi
=
-\sum_{j=1}^d \sigma
(\widehat{X_j})^2
_\xi
=
-\sum_{j=1}^d 
[i\xi(\widetilde{X_j})]^2
=
\sum_{j=1}^d [\xi (\widetilde{X_j})]^2.
\eeqs
(ii) It follows from integration by parts that the operator $i\widehat{X_j}$ is formally self-adjoint. This shows that $\Delta_G=\sum_{j} (i \widehat{X_j})^2$ is also formally self-adjoint and nonnegative.
\end{proof}
\begin{lem} \label{DeltaG}
Let $V$ be a finite dimensional $G$-invariant subspace in $C^\infty(E)$ such that the representation on $V$ is equivalent to some $(\tau_\alpha, L_\alpha) \in \widehat{G}$. Then $\Delta_G|_V = \widetilde{\lambda_\alpha} \id_V$, where $\widetilde{\lambda_\alpha}$ is an eigenvalue of the Laplacian $\Delta: C^\infty(G)\rightarrow C^\infty(G)$ corresponding to $(\tau_\alpha, L_\alpha)$ (see Section 2, Notation \ref{notation}).
\end{lem}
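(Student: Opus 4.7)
My plan is to reduce the computation to a specific realization of the equivalence class $[\tau_\alpha]$ inside $L^2(G)$, for which $\Delta_G$ becomes the Laplacian $\Delta$ on $G$, and then invoke Lemma \ref{ME}.

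The first step is the observation that on any finite dimensional $G$-invariant subspace $W\subset C^\infty(E)$ carrying a representation $\tau:G\to GL_\C(W)$, the induced operator $\widehat{X}$ restricted to $W$ is exactly the Lie-algebra derivative $d\tau(X)$, directly from the definition of $\widehat{X}$. Consequently $\Delta_G|_W=-\sum_j d\tau(X_j)^2$ depends on the representation $\tau$ in an intertwining-covariant way: if $\tau'$ on $W'$ is another realization of the same class via an isomorphism $\Phi:W\to W'$, then $d\tau'(X)=\Phi\,d\tau(X)\,\Phi^{-1}$, and hence $\Delta_G|_{W'}=\Phi\,\Delta_G|_W\,\Phi^{-1}$. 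In particular, whether $\Delta_G|_W$ is a scalar operator, and what the scalar is, depend only on the equivalence class of $\tau$.

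Next, I would exhibit a convenient realization of $[\tau_\alpha]$ for which the scalar is already known. Fix a nonzero $\lambda_0\in L_\alpha^*$ and set $V_0:=\widetilde{\Psi}(L_\alpha\otimes \C\lambda_0)\subset L^2(G)$; by Corollary \ref{PWT}, $V_0$ is a $G$-invariant copy of $L_\alpha$ under the left regular representation on $L^2(G)$. A short direct calculation shows that, on $C^\infty(G)$ with the left regular action, $\widehat{X}=-X^R$, where $X^R$ denotes the right-invariant vector field on $G$ with $X^R_e=X$. Hence $\Delta_G=-\sum_j \widehat{X}_j^{\,2}=-\sum_j (X_j^R)^2$, which by the bi-invariance of the metric on $G$ coincides with the Laplace-Beltrami operator $\Delta$. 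Applying Lemma \ref{ME}, $\Delta_G|_{V_0}=\Delta|_{V_0}=\widetilde{\lambda_\alpha}\,\id_{V_0}$, and the reduction from the first step transports this identity to the general $V$.

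The hard part is the convention bookkeeping in the second step: keeping track of the sign of $\widehat{X}$ versus $X^R$ (the $g^{-1}$ in the left regular representation flips the sign) and invoking bi-invariance to replace right-invariant vector fields with the left-invariant ones implicit in the definition of $\Delta$. Neither subtlety is deep, but both must be executed carefully to obtain the correct scalar.
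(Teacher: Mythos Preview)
Your proposal is correct and follows essentially the same strategy as the paper: both arguments observe that $\widehat{X}|_V$ is the differentiated representation $d\tau(X)$, so that $\Delta_G|_V=-\sum_j d\tau(X_j)^2$ depends only on the equivalence class $[\tau_\alpha]$, and then compute this scalar by realizing $[\tau_\alpha]$ inside $L^2(G)$ via $\widetilde{\Psi}$ and invoking Lemma~\ref{ME} (the paper phrases this through $U(\g)$-representations and Remark~\ref{irr-eig}, while you spell out the vector-field calculation $\widehat{X}=-X^R$ and the bi-invariance explicitly, but the content is the same).
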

\begin{proof}
Let $\widetilde{\tau} :\g \rightarrow \mathfrak{gl}_\C(V)$ be a $\g$-representation given by $X \mapsto \widehat{X}$. This representation is extended to that of the universal enveloping algebra $U(\g)$. Then $\Delta_G|_V$ is by definition identical to the endomorphism $-\sum _{i=1}^d \widetilde{\tau}(X_i)$. On the other hand, consider the differential of the representation $(\tau_\alpha, L_\alpha)$:
$$
d \tau_\alpha :\g\rightarrow \mathfrak{gl}_\C(L_\alpha).
$$
Since $d\tau_\alpha(X) = \left. \frac{d}{dt} \right|_{t=0}[\tau_\alpha (\exp tX)]$  for each $X \in \g$  (\cite{FW}, 3.36), $ \widetilde{\tau}$ and $d \tau_\alpha$ are equivalent as a $\g$-representation. Moreover, recall the map $\Delta|_{\Psi(L_\alpha)}(= \widetilde{\lambda_\alpha} \id_{L_\alpha})$, considered in Remark \ref{irr-eig}. If we denote by $\widetilde{\rho}:U(\g) \rightarrow \mathfrak{gl}_\C (\Psi(L_\alpha))$ the $U(\g)$-representation induced by the directional derivative on $G$, then  $\Delta|_{\Psi(L_\alpha)}$ is identical to $ -\sum_{i=1}^d \widetilde{\rho}(X_i)$. Since $\widetilde{\rho}$ and $d\tau_\alpha$ are equivalent as $\g$-representation, consequently we have $\Delta_G|_V = \widetilde{\lambda_\alpha} \id_V$.
\end{proof}

\begin{lem} 
Let $A:C^\infty( E )\rightarrow C^\infty( E )$ be a formally self-adjoint transversally elliptic operator of order $m>0$. Then 
\beq \label{DOP}
P:=A^2+(\Delta_G)^m
\eeq
is an elliptic operator satisfying the assumption in Lemma \ref{EO} : formally self-adjoint, nonnegative, and its principal symbol $\sigma(P)$ is positive definite on the unit cotangent bundle $T^*_1M$.  
\end{lem}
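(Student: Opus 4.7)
The plan is to verify the three required properties of $P$—formal self-adjointness, nonnegativity, and positive definiteness of the principal symbol on $T^*_1 M$—separately for the two summands $A^2$ and $(\Delta_G)^m$ and then combine. Formal self-adjointness is immediate: $A^2$ inherits it from $A$, and $(\Delta_G)^m$ inherits it from $\Delta_G$ by Lemma \ref{PODG}(ii). For nonnegativity, $\langle A^2 \phi, \phi \rangle_{L^2} = \|A\phi\|_{L^2}^2 \geq 0$ is clear, and for $(\Delta_G)^m$ I would split into parity cases: if $m=2k$ is even then $\langle (\Delta_G)^m \phi, \phi \rangle_{L^2} = \|\Delta_G^k \phi\|_{L^2}^2 \geq 0$, and if $m=2k+1$ is odd then $\langle (\Delta_G)^m \phi, \phi \rangle_{L^2} = \langle \Delta_G(\Delta_G^k \phi), \Delta_G^k \phi \rangle_{L^2} \geq 0$ by nonnegativity of $\Delta_G$.

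The substantive step is the principal symbol computation. Since $A$ has order $m$ and $\Delta_G$ has order $2$, both $A^2$ and $(\Delta_G)^m$ have order $2m$, and Lemma \ref{PODG}(i) combined with multiplicativity of the principal symbol gives
\[
\sigma(P)_\xi = \sigma(A)_\xi^2 + \Bigl(\sum_{j=1}^d [\xi(\widetilde{X}_j)]^2\Bigr)^m \id_{E_p}.
\]
Formal self-adjointness of $A$ makes $\sigma(A)_\xi$ Hermitian, so $\sigma(A)_\xi^2 = \sigma(A)_\xi^{*}\sigma(A)_\xi$ is positive semidefinite; the second summand is a nonnegative scalar multiple of the identity.

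The only real point to check—and what I expect to be the main obstacle—is that this sum is positive definite for every $\xi \in T^*_1 M$, given that neither summand is positive definite on all of $T^*_1 M$ individually. I would resolve this with a case split keyed to the definition of $T^*_G M$. If $\xi \in T^*_G M$, then every $\xi(\widetilde{X}_j)$ vanishes so the scalar term drops out, but transversal ellipticity (Definition \ref{def:2.1}(ii)) guarantees that $\sigma(A)_\xi$ is invertible, whence the Hermitian square $\sigma(A)_\xi^2$ is strictly positive definite. If $\xi \notin T^*_G M$, then by definition at least one $\xi(\widetilde{X}_j)$ is nonzero, so the scalar factor is strictly positive, giving a positive definite contribution that the semidefinite term $\sigma(A)_\xi^2$ only augments. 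Thus $\sigma(P)_\xi$ is positive definite on $T^*_1 M$, which both yields ellipticity (by homogeneity of the symbol, it is invertible on all of $T^*M \setminus 0$) and supplies the hypothesis demanded by Lemma \ref{EO}. The underlying geometric content is precisely that transversal ellipticity compensates for the degeneration of $\sigma(\Delta_G)$ along the conormal directions to the $G$-orbits.
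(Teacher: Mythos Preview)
Your proof is correct and follows essentially the same approach as the paper: both verify self-adjointness and nonnegativity from the summands via Lemma~\ref{PODG}, then establish positive definiteness of $\sigma(P)_\xi = \sigma(A)_\xi^2 + \sigma(\Delta_G)_\xi^m$ by the identical case split on whether $\xi \in T^*_G M$. Your version is slightly more detailed (the parity argument for nonnegativity of $(\Delta_G)^m$ and the observation that $\sigma(A)_\xi$ is Hermitian), but the substance matches.
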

\begin{proof}
We follow the proof of Theorem 2.1 in \cite{Shu84}.
By Lemma \ref{PODG}, it is apparent that $P$ is formally self-adjoint and nonnegative. Let us check the positive definiteness of $\sigma(P)$ on $T^*_1M$. If $\xi \in T_1^*M \cap T^*_GM $, it follows from Lemma \ref{PODG} (i) that $\sigma(P)_\xi=0$. Since $\sigma(A)_\xi$ is invertible, $\sigma(A)_\xi^2$ is positive definite. Therefore $\sigma(P)_\xi=\sigma(A)_\xi^2+\sigma(\Delta_G)_\xi^m$ is invertible.  On the other hand, suppose that $\xi \in T^*_1M \backslash T^*_GM$. Then $\sigma(A)_\xi^2$ is nonnegative and $\sigma(\Delta_G)^m$ is positive definite, which show the positive definiteness of $\sigma(P)_\xi$. Hence $\sigma(P)$ is positive definite on $T^*_1M$.
\end{proof}
\begin{thm}\label{ONB}
Let $A:C^\infty( E )\rightarrow C^\infty( E )$ be a formally self-adjoint transversally elliptic operator of order $m>0$ and $P$ be the elliptic operator (\ref{DOP}). Then there is an orthonormal basis $\{ \psi_j \}_{j \in \N}$ in $L^2( E )$ consisting of joint eigenfunctions of $A$, $\Delta_G$, $P$. 
\end{thm}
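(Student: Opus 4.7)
The plan is to exploit that $P = A^2 + (\Delta_G)^m$ is elliptic and satisfies the hypotheses of Lemma \ref{EO}, so that $L^2(E)$ already decomposes into finite-dimensional eigenspaces of $P$; on each such eigenspace, I will simultaneously diagonalize $A$ and $\Delta_G$ using their commutativity with $P$ and with each other.

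First I would verify that $A$, $\Delta_G$, and $P$ pairwise commute on $C^\infty(E)$. The $G$-invariance of $A$ means that $A$ commutes with the left $G$-action on $C^\infty(E)$; differentiating the identity $A \circ (\exp tX_j\cdot) = (\exp tX_j\cdot) \circ A$ at $t=0$ gives $[A, \widehat{X}_j] = 0$ for each $j = 1,\dots,d$. Consequently $A$ commutes with $\Delta_G = -\sum_j \widehat{X}_j^2$, and hence both $A$ and $\Delta_G$ commute with $P = A^2 + (\Delta_G)^m$.

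Next, by the preceding lemma and Lemma \ref{EO}, $P$ has eigenvalues $0 \leq \lambda_1 < \lambda_2 < \cdots$ tending to infinity, each eigenspace $V_{\lambda_k} := \Ker(P-\lambda_k\id)$ is finite-dimensional and contained in $C^\infty(E)$, and $L^2(E)$ is the completion of $\bigoplus_k V_{\lambda_k}$. Commutativity shows that $V_{\lambda_k}$ is invariant under both $A$ and $\Delta_G$: if $P\phi = \lambda_k \phi$ then $P(A\phi) = A(P\phi) = \lambda_k A\phi$, and similarly for $\Delta_G$. Since $V_{\lambda_k}$ is finite-dimensional and $A$, $\Delta_G$ are formally self-adjoint on $C^\infty(E)$, their restrictions $A|_{V_{\lambda_k}}$ and $\Delta_G|_{V_{\lambda_k}}$ are genuinely self-adjoint endomorphisms of the finite-dimensional inner-product space $V_{\lambda_k}$. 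Because they commute, the finite-dimensional spectral theorem yields an $L^2$-orthonormal basis of $V_{\lambda_k}$ consisting of joint eigenvectors of $A$ and $\Delta_G$ (and, by construction, of $P$).

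Concatenating these bases over all $k$ produces the desired orthonormal basis $\{\psi_j\}_{j\in\N}$ of $L^2(E)$, each element of which lies in $C^\infty(E)$ and is a joint eigenfunction of $A$, $\Delta_G$, $P$. I expect no serious obstacle: the only non-cosmetic point is the identity $[A, \widehat{X}_j] = 0$, which must be extracted from the $G$-invariance hypothesis by differentiating in the group variable; once that is in hand, the remainder is the finite-dimensional spectral theorem applied inside each eigenspace produced by the ellipticity of $P$.
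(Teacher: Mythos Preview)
Your proposal is correct and follows essentially the same approach as the paper: decompose $L^2(E)$ into the finite-dimensional eigenspaces of the elliptic operator $P$, then use the commutativity of $A$, $\Delta_G$, $P$ (derived from the $G$-invariance of $A$) together with their self-adjointness to simultaneously diagonalize them on each eigenspace. The only cosmetic difference is that the paper carries out the simultaneous diagonalization sequentially---first splitting $V_{\lambda_P}$ by eigenspaces of $A$, then each of those by eigenspaces of $\Delta_G$---rather than invoking the joint spectral theorem for commuting self-adjoint operators in one step.
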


\begin{cor}\label{A'sONB}
Let $A:C^\infty( E )\rightarrow C^\infty( E )$ be a formally self-adjoint transversally elliptic operator of order $m>0$. Then there is an orthonormal basis $\{\psi_j\}_{j\in \N}$ consisting of its joint eigenfunctions.
\end{cor}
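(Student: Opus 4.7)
The plan is to observe that this corollary is an essentially immediate consequence of Theorem \ref{ONB}. Indeed, Theorem \ref{ONB} asserts the existence of an orthonormal basis $\{\psi_j\}_{j \in \N}$ of $L^2(E)$ whose elements are \emph{joint} eigenfunctions of $A$, $\Delta_G$, and the elliptic operator $P = A^2 + (\Delta_G)^m$. In particular, each $\psi_j$ is an eigenfunction of $A$, so forgetting the auxiliary eigenvalue information for $\Delta_G$ and $P$ yields exactly the orthonormal basis required by the corollary.

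So the only step is invocation: apply Theorem \ref{ONB} to $A$ to obtain the basis $\{\psi_j\}_{j\in\N}$ and note that the property of being a joint eigenfunction is stronger than being an eigenfunction of $A$ alone. There is no additional technical content, and in particular no obstacle beyond the (substantial) content of Theorem \ref{ONB} itself, whose proof must simultaneously diagonalize the commuting family $\{A, \Delta_G, P\}$ using the ellipticity of $P$ together with the $G$-invariance (hence commutation with $\Delta_G$) of $A$. Since Theorem \ref{ONB} is stated as given, the corollary follows at once.
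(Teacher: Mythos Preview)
Your proposal is correct and matches the paper's treatment: the paper gives no separate proof of Corollary~\ref{A'sONB}, stating it immediately after Theorem~\ref{ONB} as a direct consequence. Your observation that the joint eigenfunctions of $A$, $\Delta_G$, $P$ produced by Theorem~\ref{ONB} are in particular eigenfunctions of $A$ is exactly the intended (trivial) deduction.
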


\begin{proof}[Proof of Theorem \ref{ONB}] We follow the proof of Theorem 2.1 in \cite{Shu84}. Since $P$ satisfies the assumption in Lemma \ref{EO}, we can consider the following decomposition:
$$
L^2(E)=\overline{\bigoplus_{\lambda_P} {V_{\lambda_P}}},
$$
where $V_{\lambda_P}$ is the finite dimensional eigenspace of $P$. Since $A$ is $G$-invariant, it commutes with $\Delta_G$ and $P$. This shows that $V_{\lambda_P}$ is $A$-invariant and we can decompose each $V_{\lambda_P}$ by the eigenspace of $A|_{V_{\lambda_P}}$, which we denote by $W_{\lambda_A}$:
$$
V_{\lambda_P}=\bigoplus_{\lambda_A} W_{\lambda_A}.
$$
Similarly, we can also decompose $W_{\lambda_A}$ by $U_{\lambda_G}$, the eigenspaces of $\Delta_G|_{W_{\lambda_A}}$:
$$
W_{\lambda_A}=\bigoplus_{\lambda_G} {U_{\lambda_G}}.
$$
Choosing an orthonormal basis of each $U_{\lambda_G}$ and putting together them, we have the desired basis.
\end{proof}
\begin{rmk}
Let $\{\psi_j\}_{j\in\N}$ be the basis in Theorem \ref{ONB} and $\widehat{\lambda_{A,j}}$, $\widehat{\lambda_{G,j}}$, $\widehat{\lambda_{P,j}}$ be the corresponding eigenvalues of $A$, $\Delta_G$, $P$, respectively. Then the following relation holds:
\begin{eqnarray}\label{eq:2.10}
\widehat{\lambda_{P,j}}=\widehat{\lambda_{A,j}}^2+ \widehat{\lambda_{G,j}}^m, \hsp j \in \N.
\end{eqnarray}
\end{rmk}
\begin{rmk}\label{ONB3}
In the proof of Theorem \ref{ONB}, it follows from $G$-invariance of $A$, $\Delta_G$, $P$ that $U_{\lambda_G}$ is also $G$-invariant and so we can decompose it into the irreducible representations:
$$
U_{\lambda_G}=\bigoplus_{\alpha \in \N} [U_{\lambda_G}]_\alpha, 
$$
Choosing an orthonormal basis of each $[U_{\lambda_G}]_\alpha$ and putting together them, we have an orthonormal basis $\{\psi_{\alpha,j}\}_{\alpha, j }$ in $L^2(E)$ consisting of joint eigenfunctions of $A$, $\Delta_G$, $P$ such that $\{\psi_{\alpha,j}\}_{j}$ is a basis of the $(\tau_\alpha, L_\alpha)$-component $[L^2(E)]_\alpha$ for each $\alpha \in \N$.
\end{rmk}
\begin{lem}\label{EODG}
Suppose that a formally self-adjoint transversally elliptic operator  $A:C^\infty( E )\rightarrow C^\infty( E )$ of order $m>0$ is given. Then $\Delta_G|_{[L^2(E)]_\alpha}=\widetilde{\lambda_\alpha} \id$.\end{lem}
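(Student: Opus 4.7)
The plan is to reduce the lemma to Lemma \ref{DeltaG} via the joint eigenbasis constructed in Remark \ref{ONB3}. Specifically, I would first invoke that remark to obtain an orthonormal basis $\{\psi_{\alpha,j}\}_{\alpha, j \in \N}$ of $L^2(E)$ consisting of joint smooth eigenfunctions of $A$, $\Delta_G$ and $P$, arranged so that for each fixed $\alpha$ the subfamily $\{\psi_{\alpha,j}\}_{j \in \N}$ is an orthonormal basis of the $(\tau_\alpha, L_\alpha)$-component $[L^2(E)]_\alpha$.

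Next, fix $\alpha$ and $j$. By construction $\psi_{\alpha,j}$ lies in one of the finite-dimensional isotypic pieces $[U_{\lambda_G}]_\alpha$ appearing in the decomposition of Remark \ref{ONB3}; this piece is the $(\tau_\alpha, L_\alpha)$-isotypic component of the finite-dimensional $G$-invariant subspace $U_{\lambda_G} \subset C^\infty(E)$. Since $G$ is compact, standard representation theory provides a $G$-equivariant decomposition
\[
[U_{\lambda_G}]_\alpha = V^{(1)} \oplus \cdots \oplus V^{(r)},
\]
in which each $V^{(i)}$ is an irreducible $G$-invariant subspace of $C^\infty(E)$ carrying the representation $(\tau_\alpha, L_\alpha)$. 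Lemma \ref{DeltaG} applies to each $V^{(i)}$ and yields $\Delta_G|_{V^{(i)}} = \widetilde{\lambda_\alpha}\, \id_{V^{(i)}}$, hence $\Delta_G|_{[U_{\lambda_G}]_\alpha} = \widetilde{\lambda_\alpha}\, \id$; in particular $\Delta_G \psi_{\alpha,j} = \widetilde{\lambda_\alpha}\, \psi_{\alpha,j}$.

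Finally, as this holds for every $j$, the operator $\Delta_G$ acts as $\widetilde{\lambda_\alpha}\, \id$ on the dense subspace of $[L^2(E)]_\alpha$ spanned by the smooth eigenfunctions $\{\psi_{\alpha,j}\}_{j}$. Since $\Delta_G$ is a genuine differential operator on the smooth vectors, this identity propagates to all of $[L^2(E)]_\alpha$ in the natural sense (or, equivalently, extends by continuity from the dense span), which gives the claim.

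I do not foresee any substantive obstacle: the only step requiring care is the existence of the $G$-equivariant splitting of the isotypic piece $[U_{\lambda_G}]_\alpha$ into honest irreducible copies of $L_\alpha$, which is a routine consequence of complete reducibility for finite-dimensional representations of a compact group. The whole lemma is really a bookkeeping corollary of Lemma \ref{DeltaG} combined with the joint diagonalization of Theorem \ref{ONB}.
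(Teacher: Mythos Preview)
Your proposal is correct and follows essentially the same route as the paper: apply Lemma~\ref{DeltaG} to the irreducible pieces of each finite-dimensional isotypic block $[U_{\lambda_G}]_\alpha$ from Remark~\ref{ONB3}, and then pass to $[L^2(E)]_\alpha$ by density (the paper phrases this last step as $[L^2(E)]_\alpha=\overline{\bigoplus_{\lambda_G,\lambda_A,\lambda_P}[U_{\lambda_G}]_\alpha}$). If anything, you are slightly more careful than the paper in spelling out the splitting of the isotypic component into genuine irreducibles before invoking Lemma~\ref{DeltaG}.
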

\begin{proof}
We use the notation in the proof of Theorem \ref{ONB} and Remark \ref{ONB3}. It immediately follows from Lemma \ref{DeltaG} that $\Delta_G|_{[U_{\lambda_G}]_\alpha}=\widetilde{\lambda_\alpha} \id$. Since
$$
[L^2(E)]_\alpha=\overline{\bigoplus_{\lambda_G, \lambda_A, \lambda_P}[U_{\lambda_G}]_\alpha}, 
$$
we have $\Delta_G|_{[L^2(E)]_\alpha}=\widetilde{\lambda_\alpha} \id$, as asserted.
\end{proof}
\begin{rmk} \label{ONB4}
Let $\{\psi_{\alpha,j}\}_{\alpha, j}$ be the basis considered in Remark \ref{ONB3}, and $\widehat{\lambda_{A, \alpha, j}}$, $\widehat{\lambda_{G, \alpha,j}}$, $\widehat{\lambda_{P, \alpha,j}}$ denote the corresponding eigenvalues of $A$, $\Delta_G$, $P$. By Lemma \ref{EODG}, $\widehat{\lambda_{G,\alpha,j}}=\widetilde{\lambda_\alpha}$ for all $j \in \N$ and so it follows from  (\ref{eq:2.10}) that
\beq \label{ROEV}
\widehat{\lambda_{P,\alpha,j}}=\widehat{\lambda_{A,\alpha,j}}^2+\widetilde{\lambda_\alpha}^m
, \hsp
\alpha, j \in \N
\eeq
\end{rmk}

Now we investigate each eigenspace of a transversally elliptic operator as a $G$-representation space; recall that $G$ acts on it unitarily  from left.
\begin{thm}\label{thm:2.2}
Let $A:C^\infty( E )\rightarrow C^\infty( E )$ be a formally self-adjoint transversally elliptic operator of order $m>0$. Then the 
for each $\mu\geq0$, a direct sum of the eigenspaces
\begin{eqnarray}\label{eq:2.10.3}
\overline{\bigoplus_{-\mu\leq\lambda_A\leq \mu} V_{\lambda_A}
}
\end{eqnarray}
has a distributional character $ch(\overline{\bigoplus_{-\mu\leq\lambda_A\leq \mu} V_{\lambda_A}
})$, which belongs to the Sobolev space $L^2_{-k}(G)$ where $k>n+d$. 
\end{thm}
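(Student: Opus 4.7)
The plan is to verify condition (v) of Theorem \ref{thm:1.1} for the representation on $V := \overline{\bigoplus_{-\mu \leq \lambda_A \leq \mu} V_{\lambda_A}}$, and then explicitly estimate the Sobolev norm of the resulting character. I will use the orthonormal basis $\{\psi_{\alpha,j}\}_{\alpha, j \in \N}$ from Remark \ref{ONB3}, consisting of joint eigenfunctions of $A$, $\Delta_G$, and $P$, such that $\{\psi_{\alpha,j}\}_j$ spans the isotypic component $[L^2(E)]_\alpha$. Since $A$ commutes with the $G$-action, each eigenspace of $A|_{[L^2(E)]_\alpha}$ is a $G$-invariant subspace, hence a direct sum of copies of $L_\alpha$ by Schur's lemma. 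Consequently, if $n_\alpha$ denotes the multiplicity of $(\tau_\alpha, L_\alpha)$ in $V$, then
\[
n_\alpha \cdot d_\alpha = \#\{j \in \N : |\widehat{\lambda_{A,\alpha,j}}| \leq \mu\}.
\]

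The key step is to bound the right-hand side using Weyl's formula. By the relation (\ref{ROEV}), $|\widehat{\lambda_{A,\alpha,j}}| \leq \mu$ implies $\widehat{\lambda_{P,\alpha,j}} \leq \mu^2 + \widetilde{\lambda_\alpha}^m$. Since $P$ is elliptic of order $2m$, Lemma \ref{EO} (iv) gives
\[
n_\alpha d_\alpha \leq N_P(\mu^2 + \widetilde{\lambda_\alpha}^m) \leq C_1 (\mu^2 + \widetilde{\lambda_\alpha}^m)^{n/(2m)} \leq C_\mu (1+\widetilde{\lambda_\alpha})^{n/2},
\]
where $C_\mu$ depends on $\mu$ but not on $\alpha$. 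Since $d_\alpha \geq 1$, this yields $n_\alpha \leq C_\mu (1+\widetilde{\lambda_\alpha})^{n/2}$, which is precisely condition (v) of Theorem \ref{thm:1.1}. Hence $V$ has a distributional character $\chi^{(V)} = \sum_\alpha n_\alpha \chi_\alpha$.

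For the Sobolev regularity, I use that $\Delta \chi_\alpha = \widetilde{\lambda_\alpha} \chi_\alpha$ (from Lemma \ref{ME}) and that $\{\chi_\alpha\}_{\alpha \in \N}$ is orthonormal in $L^2(G)$ (Schur's orthogonality, Lemma \ref{SL2}). Therefore the operator $(1+\Delta)^{-k/2}$ is an isometry $L^2_{-k}(G) \to L^2(G)$, and
\[
\|\chi^{(V)}\|_{L^2_{-k}(G)}^2 = \sum_{\alpha=1}^\infty \frac{n_\alpha^2}{(1+\widetilde{\lambda_\alpha})^k} \leq C_\mu^2 \sum_{\alpha=1}^\infty \frac{1}{(1+\widetilde{\lambda_\alpha})^{k-n}} \leq C_\mu^2 \sum_{\alpha=1}^\infty \frac{d_\alpha^2}{(1+\widetilde{\lambda_\alpha})^{k-n}},
\]
where the last inequality uses $d_\alpha \geq 1$. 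By Proposition \ref{prop:1.1}, the final series converges whenever $k-n > d/2$; in particular, $k > n + d$ is sufficient.

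The main obstacle is step two: bounding the per-isotypic count $n_\alpha d_\alpha$ by the global Weyl count $N_P$. The point is that every joint eigenfunction in $[L^2(E)]_\alpha$ with $|\widehat{\lambda_{A,\alpha,j}}| \leq \mu$ is counted among the at most $N_P(\mu^2 + \widetilde{\lambda_\alpha}^m)$ basis vectors with $P$-eigenvalue below that threshold, so the sum structure $\widehat{\lambda_P} = \widehat{\lambda_A}^2 + \widetilde{\lambda_\alpha}^m$ coupled with Weyl's asymptotic for $P$ is exactly what converts the cutoff $|\lambda_A| \leq \mu$ into polynomial control in $\widetilde{\lambda_\alpha}$. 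Everything else is a direct application of the already-established equivalences in Theorem \ref{thm:1.1} and the series estimate of Proposition \ref{prop:1.1}.
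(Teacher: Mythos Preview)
Your argument is correct and follows the paper's proof almost verbatim for the existence of the distributional character: both verify condition (v) of Theorem~\ref{thm:1.1} by embedding the $(\tau_\alpha,L_\alpha)$-isotypic piece of $V$ into $\bigoplus_{\lambda_P\le \mu^2+\widetilde{\lambda_\alpha}^m} V_{\lambda_P}$ via the identity (\ref{ROEV}), and then invoke Weyl's asymptotic (\ref{WAF}) to obtain $d_\alpha n_\alpha \le C_\mu(1+\widetilde{\lambda_\alpha})^{n/2}$.

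For the Sobolev regularity you take a slightly different and in fact sharper route. The paper bounds the series $(1+\Delta)^{-t}\chi^{(V)}=\sum_\alpha n_\alpha(1+\widetilde{\lambda_\alpha})^{-t}\chi_\alpha$ in the \emph{uniform} norm using $|\chi_\alpha|\le d_\alpha$, obtaining a majorant $\sum_\alpha n_\alpha d_\alpha(1+\widetilde{\lambda_\alpha})^{-t}$ and hence $\chi^{(V)}\in L^2_{-k}(G)$ for $k>n+d$. You instead compute the $L^2_{-k}$-norm directly via the orthonormality of $\{\chi_\alpha\}$, which gives $\sum_\alpha n_\alpha^2(1+\widetilde{\lambda_\alpha})^{-k}$ and yields convergence already for $k>n+d/2$. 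Both approaches rest on Proposition~\ref{prop:1.1}; yours trades the sup-norm bound $|\chi_\alpha|\le d_\alpha$ for Schur orthogonality and gains a factor of $d/2$ in the Sobolev exponent, more than enough for the stated threshold $k>n+d$.
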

\begin{proof}[Proof of Theorem \ref{thm:2.2}] We follow the proof of Theorem 2.2 in \cite{Shu84}. Let $n_\alpha=$\\$n_\alpha(\overline{\bigoplus_{-\mu\leq\lambda_A\leq \mu} V_{\lambda_A}
})$ be the multiplicity of the space (\ref{eq:2.10.3}). By Theorem $\ref{thm:1.1}$, it suffices to prove that $n_\alpha$ is finite for each $\alpha$ and estimated by
$$
n_\alpha\leq C(1+\widetilde{\lambda_\alpha})^s
$$
for some $s\in\mathbb{R}$ and $C>0$. Let us decompose the space (\ref{eq:2.10.3}) into the irreducible representations:
$$
\overline{\bigoplus_{-\mu\leq\lambda_A\leq \mu} V_{\lambda_A}
}=\overline{\bigoplus_{\alpha \in \N} H_\alpha},
$$
where $H_\alpha$ is the $(\tau_\alpha, L_\alpha)$-component. Let $\{\psi_{\alpha,j}\}_j$ be an orthonormal basis in $H_\alpha$ consisting of joint eigenfunctions of $A$, $\Delta_G$, $P$ (see Remark \ref{ONB3}).   For each $\psi_{\alpha,j}$, we denote the corresponding eigenvalues of $A$, $\Delta_G$, $P$ by $\widehat{\lambda_{A,\alpha, j}}$, $\widehat{\lambda_{G,\alpha, j}}$, $\widehat {\lambda_{P,\alpha,j}}$, respectively. By the assumption and (\ref{ROEV}), we have
\begin{eqnarray}\label{eq:2.10.5}
\widehat{\lambda_{P,\alpha,j}}
=
\widehat{\lambda_{A,\alpha,j}}^2+  \widetilde{\lambda_\alpha}^m 
\leq 
\mu^2+ \widetilde{\lambda_\alpha}^m
\end{eqnarray}
for each $j$ and $\alpha$. Therefore we have 
\begin{eqnarray}\label{eq:2.10.6}
H_\alpha
\subset
\bigoplus_{\lambda_P\leq\mu^2+ \widetilde{\lambda_\alpha}^m}V_{\lambda_P}.
\end{eqnarray}
Since the right term in (\ref{eq:2.10.6}) is finite dimensional, so is $H_\alpha$, which shows that $n_\alpha<\infty$. Comparing the dimension in (\ref{eq:2.10.6}), we have
\begin{eqnarray}\label{eq:2.11}
d_\alpha n_\alpha 
\leq 
\sum_{\lambda_P\leq\mu^2+  \widetilde{\lambda_\alpha}^m} \dim V_{\lambda_P}.
\end{eqnarray}
By the asymptotic formula (\ref{WAF}) in Lemma \ref{EO}, 
\begin{eqnarray}\label{eq:2.12}
d_\alpha n_\alpha \leq C_3(\mu^2+  \widetilde{\lambda_\alpha}^m)^\frac{n}{2m}\leq C_4 (1+\widetilde{\lambda_\alpha})^\frac{n}{2}.
\end{eqnarray}
Since $d_\alpha\geq1$, we have 
$$
n_\alpha \leq C_4 (1+\widetilde{\lambda_\alpha})^\frac{n}{2},
$$
which shows that $\overline{\bigoplus_{-\mu\leq\lambda_A\leq \mu} V_{\lambda_A}
}$ has a distributional character $ch(\overline{\bigoplus_{-\mu\leq\lambda_A\leq \mu} V_{\lambda_A}
})$, as claimed.

Next, we estimate the smoothness in the sense of Sobolev. Using Theorem \ref{thm:1.1}, we first expand $ch \left(\overline{\bigoplus_{-\mu\leq\lambda_A\leq \mu} V_{\lambda_A}
}\right)$  into the irreducible representations:
$$
ch \left(\overline{\bigoplus_{-\mu\leq\lambda_A\leq \mu} V_{\lambda_A}
}\right)=\sum_{\alpha=1}^\infty n_\alpha \chi_\alpha.
$$
Therefore, 
\begin{eqnarray}\label{eq:2.13}
(1+\Delta)^{-t}ch \left(\overline{\bigoplus_{-\mu\leq\lambda_A\leq \mu} V_{\lambda_A}
}\right)=\sum_{\alpha=1}^{\infty}\frac{n_\alpha \chi_\alpha}{(1+\widetilde{\lambda_\alpha})^t}.
\end{eqnarray} 
Note that since $(\tau_\alpha, L_\alpha)$ is unitary, the inequality $|\chi_\alpha(g)|\leq d_\alpha$ holds for all $g$. Hence the series in (\ref{eq:2.13}) has the majorant:
$$
\sum_{\alpha=1}^{\infty} \frac{n_\alpha d_\alpha}{(1+\widetilde{\lambda_\alpha})^t}
$$ 
for $t>0$. By (\ref{eq:2.12}), we have
$$
\sum_{\alpha=1}^{\infty} \frac{n_\alpha d_\alpha}{(1+\widetilde{\lambda_\alpha})^t}
\leq
C_4
\sum_{\alpha=1}^\infty \frac{1}{(1+\widetilde{\lambda_\alpha})^{t-\frac{n}{2}}}.
$$ 
By Proposition \ref{prop:1.1}, the right term in the above inequality converges for $t-\frac{n}{2}>\frac{d}{2}$. Hence if $2t<n+d$, we can write 
$$
f(g)=\sum_{\alpha=1}^\infty \frac{n_\alpha \chi_\alpha}{(1+\widetilde{\lambda_\alpha})^t}\in C(G) (\subset L^2(G)),
$$
since the series in the right term converges uniformly. Therefore if $k>n+d$, we have
$$
ch \left(\overline{\bigoplus_{-\mu\leq\lambda_A\leq \mu} V_{\lambda_A}
}\right)=(1+\Delta)^t f\in L^2_{-k}(G)
$$
as claimed. 
\end{proof}
The following can be  proved in the same way.
\begin{cor}\label{thm:2.2'}
For each eigenvalue $\lambda_A$, the corresponding eigenspace 
$V_{\lambda_A}$ has a distributional character $ch(V_{\lambda_A})$, which belongs to the Sobolev space $L^2_{-k}(G)$ where $k>n+d$. 
\end{cor}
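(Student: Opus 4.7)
The plan is to imitate the proof of Theorem \ref{thm:2.2} with the fixed eigenvalue $\lambda_A$ playing the role of the threshold $\mu$. First, since $V_{\lambda_A}$ is a closed $G$-invariant subspace of $\overline{\bigoplus_{-|\lambda_A|\leq \lambda_A' \leq |\lambda_A|} V_{\lambda_A'}}$, the multiplicities $n_\alpha:=\dim\Hom_G(L_\alpha, V_{\lambda_A})$ are bounded above by the corresponding multiplicities of that larger space. The estimate (\ref{eq:2.12}) from the proof of Theorem \ref{thm:2.2} therefore gives
\[
n_\alpha \leq C_4(1+\widetilde{\lambda_\alpha})^{n/2},
\]
and the implication $\mathrm{(v)}\Rightarrow\mathrm{(ii)}$ of Theorem \ref{thm:1.1} yields a distributional character $ch(V_{\lambda_A}) = \sum_\alpha n_\alpha \chi_\alpha$.

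For the Sobolev regularity I would repeat the majorant argument from the end of the proof of Theorem \ref{thm:2.2}. Using the unitary bound $|\chi_\alpha(g)|\leq d_\alpha$ together with $n_\alpha d_\alpha \leq C_4(1+\widetilde{\lambda_\alpha})^{n/2}$, the series
\[
(1+\Delta)^{-t}ch(V_{\lambda_A})=\sum_{\alpha=1}^\infty \frac{n_\alpha \chi_\alpha}{(1+\widetilde{\lambda_\alpha})^t}
\]
is dominated in uniform norm by $C_4\sum_\alpha (1+\widetilde{\lambda_\alpha})^{-(t-n/2)}$, which converges by Proposition \ref{prop:1.1} as soon as $t-n/2>d/2$. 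For such $t$ the series converges uniformly to a continuous function $f\in L^2(G)$, and applying $(1+\Delta)^t$ yields $ch(V_{\lambda_A})=(1+\Delta)^t f\in L^2_{-k}(G)$ for $k=2t>n+d$.

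There is essentially no new obstacle; the entire argument is mechanical once the trivial inclusion $V_{\lambda_A}\subset\overline{\bigoplus_{-|\lambda_A|\leq\lambda_A'\leq|\lambda_A|}V_{\lambda_A'}}$ is noted. The only cosmetic change from the proof of Theorem \ref{thm:2.2} is that in the analogue of inequality (\ref{eq:2.10.5}) one uses the exact identity $\widehat{\lambda_{P,\alpha,j}}=\lambda_A^2+\widetilde{\lambda_\alpha}^m$ in place of the upper bound $\mu^2+\widetilde{\lambda_\alpha}^m$, which if anything simplifies the bookkeeping.
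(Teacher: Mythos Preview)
Your proposal is correct and follows exactly the route the paper intends: the paper simply states that the corollary ``can be proved in the same way'' as Theorem \ref{thm:2.2}, and you have written out precisely that argument, using the inclusion $V_{\lambda_A}\subset\overline{\bigoplus_{-|\lambda_A|\le\lambda_A'\le|\lambda_A|}V_{\lambda_A'}}$ to inherit the multiplicity estimate (\ref{eq:2.12}) and then repeating the majorant computation verbatim. Your closing remark that one could equally well run the proof of Theorem \ref{thm:2.2} directly with the exact identity $\widehat{\lambda_{P,\alpha,j}}=\lambda_A^{2}+\widetilde{\lambda_\alpha}^{\,m}$ in place of the bound (\ref{eq:2.10.5}) is also accurate.
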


\section{The heat operator of a transversally elliptic operator}

Let $M$ be an oriented compact Riemannian manifold and $\Delta_M$ be the Laplace-Beltrami operator acting on $C^\infty( E )$, where $E=\Lambda^*(T^*M)$ is the exterior bundle of $M$. Recall that the heat operator $e^{-t\Delta_M}:L^2(E)\rightarrow L^2(E)$ of $\Delta_M$ is defined by
$$
e^{-t\Delta_M}(s)
=
\sum_{\lambda}
e^{-t\lambda} s_\lambda,
$$
where $\lambda$ is an eigenvalue of $\Delta_M$ and  $s=\sum_\lambda s_\lambda$ is an $L^2$-section of $E$ expanded by the eigenfunctions of $\Delta_M$. It is known that its trace is given by 
\beq \label{tcc}
\sum_{\lambda} e^{-t\lambda}  \dim V_{\lambda}, 
\eeq
where $V_{\lambda}$ is the eigenspace of $\Delta_M$. This and its asymptotic expansion is known as an important tool for studying geometry and topology \cite{JR}. 

From now on,  as in Section 2 and 3, let $G$ be an oriented connected compact Lie group with a bi-invariant Riemannian metric. Let also $E$ be a complex $G$-bundle over an oriented compact $G$-manifold $M$ with $G$-invariant metrics. By Corollary $\ref{A'sONB}$, we can generalize the above definition to  transversally elliptic operators.
\begin{defi}
Let $A$ be a nonnegative formally self-adjoint transversally elliptic operator of order $m>0$. We define its heat operator $e^{-t A}:L^2(E)\rightarrow L^2(E)$ by
$$
e^{-t A}(s)
=
\sum_{\lambda_A}
e^{-t\lambda_A} s_{\lambda_A},
$$
where $\lambda_A$ is an eigenvalue of $A$ and  $s=\sum_{\lambda_A} s_{\lambda_A}$ is an $L^2$-section of $E$ expanded by the eigenfunctions of $A$. 
\end{defi}
The purpose of this paper is to define and investigate the  \emph{character}, that is a distribution on $G$ generalizing the trace of the heat operator to the $G$-equivariant case. Roughly speaking, the \emph{character}  is
\begin{eqnarray}\label{CH}
\sum_{\lambda_A} e^{-t\lambda_A} ch(V_{\lambda_A}),
\end{eqnarray}
where $ch(V_{\lambda_A})$ is a distributional character considered in Corollary \ref{thm:2.2'}. Notice that the classical case (\ref{tcc}) is covered by $G=\{e\}$ since 
$\{e\}$-transversally elliptic operators are elliptic ones and then  $ch(V_{\lambda_A})=\dim V_{\lambda_A}$. First problem to be considered is the well-definedness of (\ref{CH}). As we mentioned in Remark \ref{rmk}, the spectrum of a transversally elliptic operator can be dense on $\R$ so we have to determine the order of the summation (\ref{CH}) in a natural way. We solve this by representation theory. By Corollary \ref{pwt3}, we have the following decomposition:
$$
L^2( E )=\overline{\bigoplus_{\alpha \in \N} [L^2(E)]_\alpha},
$$
where $[L^2(E)]_\alpha$ denotes the $(\tau_\alpha, L_\alpha)$-component in $L^2(E)$. Let $\{\psi_{\alpha,j}\}_{\alpha, j}$ be an orthonormal basis in $L^2(E)$ consisting of joint eigenfunctions of $A$, $\Delta_G$, $P$ such that $\{\psi_{\alpha,j}\}_{j}$ is an orthonormal basis in $[L^2(E)]_\alpha$ for each $\alpha$ (see Remark \ref{ONB3}). For each $\psi_{\alpha,j}$, we denote  the corresponding eigenvalues by $\widehat{\lambda_{A,\alpha,j}}$, $\widehat{\lambda_{G,\alpha,j}}$, $\widehat{\lambda_{P,\alpha,j}}$. Recall that the following relation holds (see Remark \ref{ONB4}):
\begin{eqnarray}\label{EJ}
\widehat{\lambda_{P,\alpha,j}}
=\widehat{\lambda_{A,\alpha,j}}^2+ \widehat{\lambda_{G,\alpha,j}}^m
=\widehat{\lambda_{A,\alpha,j}}^2+ \widetilde{\lambda_\alpha}^m,
\end{eqnarray}
where $\widetilde{\lambda_\alpha}$ is an eigenvalue of the Laplacian $\Delta:C^\infty(G)\rightarrow C^\infty(G)$ corresponding to $(\tau_\alpha, L_\alpha) \in \widehat{G}$. This identity implies, by discreteness of $\{\widehat{\lambda_{P,\alpha,j}}\}_{j}$, the set $\{\widehat{\lambda_{A,\alpha,j}}\}_{j}$ is also discrete and we can assume that 
$$
\widehat{\lambda_{A,\alpha,1}}\leq \widehat{\lambda_{A,\alpha,2}}\leq....
$$ 
Moreover, let also $\{{\lambda_{A,\alpha,k}}\}_k$ be the eigenvalues taken into account of their multiplicities, i.e., 
$$
{\lambda_{A,\alpha,1}}<{\lambda_{A,\alpha,2}}<....
$$
We put 
$$
W_{{\lambda_{A,\alpha,k}}}
=\{\varphi \in C^\infty( E ) | A\varphi={\lambda_{A,\alpha,k}}\ \varphi,\ \varphi\in H_\alpha\}.
$$
Notice that $W_{\lambda_{A,\alpha,k}}$ is finite dimensional since 
if $\phi \in W_{\lambda_{A,\alpha,k}}$ then $P\phi= (\lambda_{A,\alpha,k}^2 +\widetilde{\lambda_\alpha}^m)\phi$.
\begin{defi}
Suppose that the series 
\begin{eqnarray}\label{CH2}
\sum_{\alpha=1}^\infty \sum_{k=1}^\infty e^{-t{\lambda_{A,\alpha,k}}} \ ch\left(W_{{\lambda_{A,\alpha,k}}}\right)
\end{eqnarray}
converges weakly to a distribution on $G$, where $ch\left(W_{{\lambda_{A,\alpha,k}}}\right)$ denotes the character of a representation in the finite dimensional space $W_{{\lambda_{A,\alpha,k}}}$. We say this to be the character of $e^{-tA}$ and denote by $ch(e^{-tA})$.
\end{defi}
We put
\begin{eqnarray}\label{DDD}
f_\alpha(t)
=
\sum_{k=1}^\infty 
e^{-t{\lambda_{A,\alpha,k}}} \ 
n_\alpha(W_{{\lambda_{A,\alpha,k}}})
\ \ 
(\in [0,\infty]),
\end{eqnarray}
where
$n_\alpha(W_{{\lambda_{A,\alpha,k}}}) $ 
denotes the multiplicity of 
$(\tau_\alpha, L_\alpha) \in \widehat{G}$ 
in 
$W_{{\lambda_{A,\alpha,k}}}$. 
Then the following proposition states when the series (\ref{CH2}) converges weakly in $\mathcal{D}'(G)$.
\begin{prop}\label{Con}
Suppose that $f_\alpha(t)$ converges for each $\alpha$ and $t>0$,  and  there is a constant $C(t)$ which depends only on $t$ such that 
\begin{eqnarray}\label{ASS}
f_\alpha (t)
\leq
C(t)(1+\widetilde{\lambda_\alpha})^s.
\end{eqnarray}
Then the series (\ref{CH2}) converges weakly in $\mathcal{D}'(G)$. 
\end{prop}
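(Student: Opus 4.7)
The plan is to collapse the inner sum over $k$ using the isotypic decomposition, and then apply the Sobolev-theoretic reasoning from the proof of Theorem \ref{thm:1.1} (specifically the implications (v)$\Rightarrow$(iv)$\Rightarrow$(iii)) to the resulting single series indexed by $\alpha$.

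First I would observe that each $W_{\lambda_{A,\alpha,k}}$ is contained in the isotypic component $H_\alpha = [L^2(E)]_\alpha$ by its very definition, hence is a finite direct sum of copies of $L_\alpha$, so
$$
ch(W_{\lambda_{A,\alpha,k}}) = n_\alpha(W_{\lambda_{A,\alpha,k}})\,\chi_\alpha.
$$
By the hypothesis that $f_\alpha(t)<\infty$, for each $\varphi \in C^\infty(G)$ the inner series is absolutely convergent and collapses to
$$
\sum_{k=1}^\infty e^{-t\lambda_{A,\alpha,k}}\langle ch(W_{\lambda_{A,\alpha,k}}),\varphi\rangle_{\D'(G)} = f_\alpha(t)\,\langle \chi_\alpha,\varphi\rangle_{\D'(G)}.
$$
Thus weak convergence of (\ref{CH2}) is equivalent to the convergence of $\sum_{\alpha=1}^\infty f_\alpha(t)\chi_\alpha$ in $\D'(G)$.

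The remaining step mirrors the proof of Theorem \ref{thm:1.1}. Fix any $s' > 2s + d/2$, where $s$ is the exponent in (\ref{ASS}). By the Schur orthogonality relations (Lemma \ref{SL2}), $\{\chi_\alpha\}_{\alpha \in \N}$ is an orthonormal system in $L^2(G)$. Using (\ref{ASS}),
$$
\sum_{\alpha=1}^\infty \frac{f_\alpha(t)^2}{(1+\widetilde{\lambda_\alpha})^{s'}} \leq C(t)^2 \sum_{\alpha=1}^\infty \frac{1}{(1+\widetilde{\lambda_\alpha})^{s'-2s}},
$$
and by Proposition \ref{prop:1.1} the right-hand side is finite for our choice of $s'$. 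Hence
$$
\sum_{\alpha=1}^\infty \frac{f_\alpha(t)}{(1+\widetilde{\lambda_\alpha})^{s'/2}}\chi_\alpha
$$
converges in $L^2(G)$. Applying the isometry $(1+\Delta)^{s'/2}:L^2(G) \to L^2_{-s'}(G)$ (which acts diagonally on the $\chi_\alpha$ with eigenvalue $(1+\widetilde{\lambda_\alpha})^{s'/2}$) yields convergence of $\sum_\alpha f_\alpha(t)\chi_\alpha$ in $L^2_{-s'}(G) \subset \D'(G)$.

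The argument is essentially a verbatim adaptation of the Sobolev estimate in Theorem \ref{thm:1.1}, with the positive integers $n_\alpha$ there replaced by the non-negative reals $f_\alpha(t)$. I do not anticipate any serious obstacle beyond making the regrouping of the double sum rigorous, which is justified by the absolute convergence of $f_\alpha(t)$ for every $\alpha$.
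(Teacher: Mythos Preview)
Your proof is correct and follows the same route as the paper: collapse the inner sum to rewrite (\ref{CH2}) as $\sum_\alpha f_\alpha(t)\chi_\alpha$, then invoke the growth condition (\ref{ASS}) together with the Sobolev argument underlying Theorem~\ref{thm:1.1}. The paper simply cites Theorem~\ref{thm:1.1}\,(v) at that point, whereas you unpack the (v)$\Rightarrow$(iv)$\Rightarrow$(iii) step explicitly; this is arguably the more careful thing to do, since Theorem~\ref{thm:1.1} is stated for integer multiplicities $n_\alpha$ and the $f_\alpha(t)$ are merely nonnegative reals, but the substance of the argument is identical.
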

\begin{proof}
If we decompose 
$ch
\left(
W_{{\lambda_{A,\alpha,k}}}
\right)
$
into the irreducible representation, the series ($\ref{CH2}$) can be written as 
\begin{eqnarray}\label{DCH}
\sum_{\alpha=1}^\infty 
\sum_{k=1}^\infty 
e^{-t{\lambda_{A,\alpha,k}}} \ 
ch
\left(
W_{{\lambda_{A,\alpha,k}}}
\right)
&=&
\sum_{\alpha=1}^\infty 
\sum_{k=1}^\infty 
e^{-t{\lambda_{A,\alpha,k}}} \ 
n_\alpha(W_{{\lambda_{A,\alpha,k}}}) 
\chi_\alpha
\\
&=&
\sum_{\alpha=1}^\infty 
f_\alpha(t)\chi_\alpha.
\end{eqnarray}
By Theorem $\ref{thm:1.1}$ (v) and the assumption (\ref{ASS}), the series ($\ref{CH2}$) converges weakly in $\mathcal{D}'(G)$, as claimed.
\end{proof}
\begin{prop}\label{example}
Suppose that there exists an oriented compact Riemannian manifold $N$, a Hermitian vector bundle $F$ over $N$, and an elliptic operator $P_N:C^\infty(F)\rightarrow C^\infty(F)$ satisfying the assumption in Lemma \ref{EO} such that  
$$
M=G\times N, \hsp
E=(G\times \mathbb{C}) \boxtimes F,\hsp
A=\id \otimes P_N,
$$
where $G$  acts on $M$ by
$$
g\cdot(h, q)=(gh,q),\hsp g\in G,\ (h,q)\in G\times N,
$$
and trivially on $F$. Then for $t>0$ the character of $e^{-tA}$ converges weakly in $\D'(G)$ and belongs to $L^2_{-k}(G)$ for $k>d$.
\end{prop}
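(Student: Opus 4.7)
The plan is to verify the hypotheses of Proposition \ref{Con} for the specific product structure. First I would identify the $G$-isotypic pieces of $L^2(E)$. Since $M=G\times N$ with $G$ acting only on the first factor, and $E=(G\times\C)\boxtimes F$ with $G$ acting trivially on $F$, there is a unitary $G$-isomorphism $L^2(E)\cong L^2(G)\otimes L^2(F)$, where $G$ acts by left translation on $L^2(G)$ and trivially on $L^2(F)$. By the Peter--Weyl decomposition (Corollary \ref{PWT}), the $(\tau_\alpha,L_\alpha)$-component of $L^2(G)$ under the left regular representation is $L_\alpha\otimes L_\alpha^*$, which restricts under the left $G$-action to $d_\alpha$ copies of $L_\alpha$. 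Consequently, the $\tau_\alpha$-component of $L^2(E)$ is $H_\alpha\cong(L_\alpha\otimes L_\alpha^*)\otimes L^2(F)$.

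Next I would compute $f_\alpha(t)$ explicitly. Let $\{\mu_k\}_{k\in\N}$ be the distinct eigenvalues of $P_N$ with finite-dimensional eigenspaces $V_{\mu_k}\subset C^\infty(F)$, which exist by Lemma \ref{EO}. Since $A=\id\otimes P_N$, the $\mu_k$-eigenspace of $A$ equals $L^2(G)\otimes V_{\mu_k}$, and intersecting with $H_\alpha$ yields
$$W_{\mu_k}\cap H_\alpha=(L_\alpha\otimes L_\alpha^*)\otimes V_{\mu_k},$$
whose $\tau_\alpha$-multiplicity is $d_\alpha\dim V_{\mu_k}$. Therefore
$$f_\alpha(t)=\sum_{k=1}^\infty e^{-t\mu_k}\,d_\alpha\dim V_{\mu_k}=d_\alpha\,T(t),$$
where $T(t):=\tr(e^{-tP_N})$ is the classical heat trace of $P_N$, which is finite for every $t>0$ by Lemma \ref{EO}. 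Combining this with the bound $d_\alpha\leq\const(1+\widetilde{\lambda_\alpha})^{d/4}$ used in the proof of Theorem \ref{thm:1.1} (a direct consequence of Proposition \ref{prop:1.1}) gives $f_\alpha(t)\leq C(t)(1+\widetilde{\lambda_\alpha})^{d/4}$, and Proposition \ref{Con} then delivers the weak convergence of $ch(e^{-tA})$ in $\D'(G)$.

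For the Sobolev regularity, formula (\ref{DCH}) now reads
$$ch(e^{-tA})=T(t)\sum_{\alpha=1}^\infty d_\alpha\,\chi_\alpha.$$
Using $(1+\Delta)^{-k/2}\chi_\alpha=(1+\widetilde{\lambda_\alpha})^{-k/2}\chi_\alpha$ from Lemma \ref{ME}, together with the orthonormality $(\chi_\alpha,\chi_\beta)_{L^2(G)}=\delta_{\alpha\beta}$ obtained from Lemma \ref{SL2}, I compute
$$\bigl\|(1+\Delta)^{-k/2}ch(e^{-tA})\bigr\|_{L^2}^2=T(t)^2\sum_{\alpha=1}^\infty\frac{d_\alpha^2}{(1+\widetilde{\lambda_\alpha})^k},$$
which converges whenever $k>d/2$ by Proposition \ref{prop:1.1}. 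In particular $ch(e^{-tA})\in L^2_{-k}(G)$ for every $k>d$.

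The main obstacle is really bookkeeping rather than analysis: one must correctly identify how the $\tau_\alpha$-isotypic components of $L^2(G)\otimes L^2(F)$ intersect the $A$-eigenspaces so that $n_\alpha(W_{\mu_k})$ is read off as $d_\alpha\dim V_{\mu_k}$, i.e.\ that the extra factor of $d_\alpha$ coming from $L_\alpha^*$ appears linearly and not as $d_\alpha^2$. Once this identification is in place the proposition reduces to the two already-established ingredients: the finiteness of the classical heat trace $T(t)$ (Lemma \ref{EO}) and the summability estimate of Proposition \ref{prop:1.1}.
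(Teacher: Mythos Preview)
Your proof is correct and follows the same route as the paper: identify $H_\alpha$ via Peter--Weyl, obtain $f_\alpha(t)=d_\alpha\tr(e^{-tP_N})$, invoke Proposition \ref{Con}, and finish with Proposition \ref{prop:1.1}. The only cosmetic differences are that the paper bounds the heat trace $\sum_k e^{-t\lambda_{P_N,k}}\dim V_{\lambda_{P_N,k}}$ explicitly via the Weyl asymptotics rather than citing it as known, and it establishes the Sobolev regularity through a uniform-norm majorant on $(1+\Delta)^{-s}ch(e^{-tA})$ rather than the $L^2$-orthonormality argument you use (your version in fact gives the slightly sharper range $k>d/2$).
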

\begin{proof}
If we denote by $0\leq{\lambda_{P_N,1}}<{\lambda_{P_N,2}}<...$ the eigenvalues of $P_N$, then 
\begin{eqnarray}\label{EV}
{\lambda_{P_N,k}}={\lambda_{A,\alpha, k}}
\end{eqnarray}
for all $k$ and $\alpha$. Moreover,  since by Corollary \ref{PWT} we can write 
$$
H_\alpha=(L_\alpha\otimes L_\alpha^*)\otimes L^2(F),
$$
the following relation holds:
\begin{eqnarray}\label{TE}
W_{{\lambda_{A,\alpha,k}}}
=
(L_\alpha\otimes L_\alpha^*)\otimes V_{{\lambda_{P_N,k}}}
=
\left[(\dim L_\alpha^* )L_\alpha\right]\otimes V_{{\lambda_{P_N,k}}}
\end{eqnarray}
Therefore, we have 
\begin{eqnarray}\label{BBB}
n_\alpha({W_{{\lambda_{A,\alpha,k}}}})
=
(\dim L_\alpha^*)\cdot(\dim V_{{\lambda_{P_N,k}}})
=d_\alpha (\dim V_{{\lambda_{P_N,k}}})
\end{eqnarray}
Hence by (\ref{EV}) and (\ref{BBB}), we have
\begin{eqnarray*}
f_\alpha(t)
&=&
\sum_{k=1}^\infty 
e^{-t{\lambda_{A,\alpha,k}}} \ 
n_\alpha(W_{{\lambda_{A,\alpha,k}}})
=
d_\alpha
\sum_{k=1}^\infty 
e^{-t{\lambda_{P_N,k}}} \ 
 (\dim V_{{\lambda_{P_N,k}}})
\\
&\leq&
d_\alpha
\sum_{k=1}^\infty 
e^{-t{\lambda_{P_N,k}}} \ 
\sum_{l=1}^k \dim V_{{\lambda_{P_N,l}}}
\leq
d_\alpha
\sum_{k=1}^\infty 
e^{-t{\widehat{\lambda_{P_N,k}}}} \ 
\sum_{l=1}^k \dim V_{{\lambda_{P_N,l}}}
\end{eqnarray*}
By the asymptotic formula (\ref{WAF}) and (\ref{WAF2}), there are constant $C>0$ and $C'(t)>0$ such that
\begin{eqnarray*}
f_\alpha(t)
&\leq&
C d_\alpha
\sum_{k=1}^\infty 
e^{-t{k^{\frac{2m}{n}}}} \ 
k^{\frac{n}{2m}}
=
C d_\alpha
\sum_{k=1}^\infty 
e^{-t{k^{\frac{2m}{n}}}} \ 
[{k^{\frac{2m}{n}}}]^{(\frac{n^2}{4m^2})}
\\
&\leq&
C d_\alpha
\sum_{k=1}^\infty 
e^{-t{k^{\frac{2m}{n}}}} \ 
[{k^{\frac{2m}{n}}}]^{({n^2})}
\leq
C
d_\alpha \int _1^\infty e^{-tx} x^{n^2} dx
=C'(t) d_\alpha.
\end{eqnarray*}
This shows,  by Proposition \ref{Con}, the series (\ref{CH2}) converges weakly in $\mathcal{D}'(G)$ to a distribution for $t>0$, which we denote by $ch(e^{-tA})$. 

We now investigate the smoothness of $ch(e^{-tA})$ in the sense of Sobolev.
By the above calculation, 
\begin{eqnarray}\label{CCC}
(1+\Delta)^{-s}{ch(e^{-tA})}
=
C'(t)\ \sum_{\alpha=1}^\infty \frac{d_\alpha}{(1+\widetilde{\lambda_\alpha})^s}\chi _\alpha.
\end{eqnarray}
By Proposition \ref{prop:1.1}, the series
\begin{eqnarray}
\sum_{\alpha=1}^\infty 
\left\|
\frac{d_\alpha}{(1+\widetilde{\lambda_\alpha})^s}\chi _\alpha
\right\|_{C (G)}
=\sum_{\alpha=1}^\infty \frac{d_\alpha^2}{(1+\widetilde{\lambda_\alpha})^s}
\end{eqnarray}
converges for $s>\frac{d}{2}$ . This shows that (\ref{CCC}) is continuous for $s>\frac{d}{2}$. Hence we can conclude that  $ch(e^{-tA})\in L^2_{-k}(G)$ for $k>d$. 
\end{proof}
Now we state the main result of this paper, which describes the  properties of $f_\alpha(t)$. 
\begin{thm}\label{Main}
Let $A:C^\infty( E )\rightarrow C^\infty( E )$ be a nonnegative formally self-adjoint transversally elliptic operator of order $m>0$. Then 
\begin{enumerate}
\item $f_\alpha(t)$ is finite for each $\alpha \in \N$ and $t>0$. More precisely, for each $\alpha$ and $\e>0$, there is a constant $C_5(\alpha,\e)>0$ depending only on $G$, $A$, $\alpha$, $\e$ such that for all $\alpha$ and $t>\e$
\beq \label{main}
f_\alpha(t)
\leq
C_5(\alpha,\e) 
+
C_1^m
\frac{\,n! \,   (1+\widetilde{\lambda_\alpha}^m)^{2m}} {(t-\e)^n}
e^{
t{\widetilde{\lambda_\alpha}}^{m/2}},
\eeq
where $n=\dim M$ and $C_1$ is a constant in the Weyl's formula (\ref{WAF}).
\item $f_\alpha(t)$ is smooth with respect to the parameter $t>0$.
\end{enumerate}

\end{thm}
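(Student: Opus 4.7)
The plan is to recast $f_\alpha(t)$ as a spectral trace on the isotypic component $H_\alpha=[L^2(E)]_\alpha$, trade the $A$-spectrum for the $P$-spectrum via the identity of Remark \ref{ONB4}, and bound the resulting elliptic heat-type sum by Weyl's formula (Lemma \ref{EO}(iv)) applied to $P$.

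First I would use the joint orthonormal basis $\{\psi_{\alpha,j}\}_{j\in\N}$ of $H_\alpha$ from Remark \ref{ONB3}. Because each $W_{\lambda_{A,\alpha,k}}$ is a $G$-invariant subspace of $H_\alpha$, it decomposes as $L_\alpha^{\oplus n_\alpha(W_{\lambda_{A,\alpha,k}})}$, so that $\dim W_{\lambda_{A,\alpha,k}}=d_\alpha\,n_\alpha(W_{\lambda_{A,\alpha,k}})$, and regrouping by $A$-eigenvalue gives $d_\alpha f_\alpha(t)=\sum_{j=1}^{\infty} e^{-t\widehat{\lambda_{A,\alpha,j}}}$. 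Since $A$ is nonnegative, Remark \ref{ONB4} yields $\widehat{\lambda_{A,\alpha,j}}=(\widehat{\lambda_{P,\alpha,j}}-\widetilde{\lambda_\alpha}^{m})^{1/2}$, and the elementary inequality $\sqrt{x-y}\geq\sqrt{x}-\sqrt{y}$ (valid for $x\geq y\geq 0$) then gives
\begin{equation*}
\widehat{\lambda_{A,\alpha,j}}\;\geq\;\widehat{\lambda_{P,\alpha,j}}^{1/2}-\widetilde{\lambda_\alpha}^{m/2}
\qquad\text{and hence}\qquad
d_\alpha\,f_\alpha(t)\;\leq\;e^{t\widetilde{\lambda_\alpha}^{m/2}}\sum_{j=1}^{\infty}e^{-t\widehat{\lambda_{P,\alpha,j}}^{1/2}}.
\end{equation*}
This step already accounts for the factor $e^{t\widetilde{\lambda_\alpha}^{m/2}}$ of the stated bound and eliminates the transversally elliptic operator $A$ from the remaining analysis.

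To control the remaining $P$-sum I would invoke Weyl's formula: the counting function $\#\{j:\widehat{\lambda_{P,\alpha,j}}\leq T\}$ is dominated by $N_P(T)\leq C_1(1+T)^{n/(2m)}$. I would split the index set into a finite ``low'' part (those $j$ with $\widehat{\lambda_{P,\alpha,j}}$ below a threshold depending on $\alpha$ and $\varepsilon$, whose contribution is absorbed into $C_5(\alpha,\varepsilon)$) and a ``high'' tail, and then estimate the tail by the substitution $s=T^{1/2}$, the wasteful majorizations $(1+s^{2})^{n/(2m)}\leq(1+s)^{n}$ (valid for $m\geq 1$) and $(1+s^{2}+\widetilde{\lambda_\alpha}^{m})\leq(1+s^{2})(1+\widetilde{\lambda_\alpha}^{m})$, together with Abel summation and the elementary identity
\begin{equation*}
\int_{0}^{\infty}e^{-(t-\varepsilon)s}\,s^{n-1}\,ds\;=\;\frac{(n-1)!}{(t-\varepsilon)^{n}}.
\end{equation*}
This yields both the singular factor $(t-\varepsilon)^{-n}$ and a polynomial factor in $(1+\widetilde{\lambda_\alpha}^{m})$ multiplied by a power of $C_1$, matching the structure of the claimed estimate.

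Part (ii) then follows by termwise differentiation: the $r$th derivative of $f_\alpha$ inserts a factor $(-\lambda_{A,\alpha,k})^{r}$, and the elementary bound $\lambda^{r}e^{-t\lambda}\leq C_{r,\delta}\,e^{-(t-\delta)\lambda}$ (for $\lambda\geq 0$ and any $\delta\in(0,t)$) shows that the $r$th-derivative series obeys the same type of estimate as in (i) with $t$ replaced by $t-\delta$, yielding uniform convergence on compact subsets of $(0,\infty)$ and hence $f_\alpha\in C^{\infty}((0,\infty))$. I expect the main obstacle to be the last step of the estimate in (i): because Weyl's formula provides a counting bound with exponent $n/(2m)$ while the stated inequality features the looser exponent $n$, the gap must be paid for by a polynomial factor in $(1+\widetilde{\lambda_\alpha}^{m})$, and pinning down the precise exponent $2m$ and the exact power $C_1^{m}$ of Weyl's constant through the change of variables and the Abel-summation step will require careful bookkeeping.
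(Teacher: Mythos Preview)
Your core maneuver---rewriting $d_\alpha f_\alpha(t)=\sum_j e^{-t\widehat{\lambda_{A,\alpha,j}}}$ and then using $\widehat{\lambda_{A,\alpha,j}}\geq\widehat{\lambda_{P,\alpha,j}}^{1/2}-\widetilde{\lambda_\alpha}^{m/2}$ from (\ref{ROEV})---is correct and is in fact more direct than the paper's argument. The paper never passes through this trace identity; instead it bounds each multiplicity $n_\alpha(W_{\lambda_{A,\alpha,k}})$ separately via the estimate (\ref{EST}) coming from (\ref{eq:2.12}), which injects a polynomial weight $(\lambda_{A,\alpha,k}^2+\widetilde{\lambda_\alpha}^m)^{n/(2m)}$ into the $k$-sum. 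To absorb that weight the paper splits the sum at $K_{\alpha,\varepsilon}$ (Lemma~\ref{DOK}), uses $x^{4m}<e^{\varepsilon x}$ on the tail (Lemma~\ref{DOL}), and expands $(\lambda_{A,\alpha,k}^2+\widetilde{\lambda_\alpha}^m)^{2m}$ binomially; this detour is exactly where $C_5(\alpha,\varepsilon)$, the denominator $(t-\varepsilon)^n$, and the factor $(1+\widetilde{\lambda_\alpha}^m)^{2m}$ in (\ref{main}) originate. Only afterward does the paper invoke Lemma~\ref{EFB} (your eigenvalue inequality) and Weyl, finishing with the same gamma-type integral you wrote down.

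Because your route bypasses (\ref{EST}), once you reach $d_\alpha f_\alpha(t)\leq e^{t\widetilde{\lambda_\alpha}^{m/2}}\sum_j e^{-t\widehat{\lambda_{P,\alpha,j}}^{1/2}}$ and dominate by the full $P$-spectrum $\sum_k e^{-t\widehat{\lambda_{P,k}}^{1/2}}$, there is no residual $\alpha$-dependence left in the sum. Your majorization $(1+s^2+\widetilde{\lambda_\alpha}^m)\leq(1+s^2)(1+\widetilde{\lambda_\alpha}^m)$ therefore has no place in your argument, and you do not need $\varepsilon$ at all for part~(i): Abel summation against $N_P(T)\leq C(1+T)^{n/(2m)}$ and the substitution $s=T^{1/2}$ give directly a bound of the shape $f_\alpha(t)\leq C\,d_\alpha^{-1}\,t^{-n}e^{t\widetilde{\lambda_\alpha}^{m/2}}$ (plus a finite $\alpha$-independent remainder from the pre-asymptotic range of Weyl). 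This is cleaner than (\ref{main}) and already suffices for both the finiteness claim and, via your termwise-differentiation argument, part~(ii). Do not try to reverse-engineer the specific factors $(1+\widetilde{\lambda_\alpha}^m)^{2m}$ and $C_1^m$: they are by-products of the paper's longer path through (\ref{EST}), not of yours.
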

\begin{rmk}
Since the conclusion in Theorem \ref{Main} does not satisfy the assumptions in Proposition \ref{Con}, we can not state that the heat operator of a nonnegative formally self-adjoint  transversally elliptic operator $A$ has the character $ch(e^{-tA})$ in general. 
\end{rmk}
To prove Theorem \ref{Main}, we first prove the following four lemmas.

\begin{lem}
\begin{eqnarray}\label{EST}
n_\alpha(W_{{\lambda_{A,\alpha,k}}})d_\alpha 
\leq
C_3({\lambda_{A,\alpha,k}}^2+ \widetilde{\lambda_\alpha}^m)^\frac{2m}{n},
\end{eqnarray}
where $C_3$ is a constant in (\ref{eq:2.12}).
\end{lem}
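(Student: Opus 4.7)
The plan is to repeat, for a single joint eigenspace $W_{\lambda_{A,\alpha,k}}$, the dimension-counting argument used at (\ref{eq:2.11})--(\ref{eq:2.12}) in the proof of Theorem \ref{thm:2.2}. First I observe that $W_{\lambda_{A,\alpha,k}}$ is a $G$-invariant subspace of the $(\tau_\alpha,L_\alpha)$-isotypic component $H_\alpha$, so by Schur's lemma together with (\ref{DCP1}) its total dimension equals $n_\alpha(W_{\lambda_{A,\alpha,k}})\,d_\alpha$.

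The key ingredient is the eigenvalue identity (\ref{ROEV}): every $\phi\in W_{\lambda_{A,\alpha,k}}$ satisfies
\[
P\phi=(\lambda_{A,\alpha,k}^2+\widetilde{\lambda_\alpha}^m)\phi.
\]
Thus $W_{\lambda_{A,\alpha,k}}$ is contained in the single $P$-eigenspace $V_{\lambda_P}$ with $\lambda_P=\lambda_{A,\alpha,k}^2+\widetilde{\lambda_\alpha}^m$, and \emph{a fortiori}
\[
W_{\lambda_{A,\alpha,k}}\;\subset\;\bigoplus_{\lambda_P\le\lambda_{A,\alpha,k}^2+\widetilde{\lambda_\alpha}^m}V_{\lambda_P}.
\]
Taking dimensions yields
\[
n_\alpha(W_{\lambda_{A,\alpha,k}})\,d_\alpha\;\le\;N\bigl(\lambda_{A,\alpha,k}^2+\widetilde{\lambda_\alpha}^m\bigr),
\]
where $N$ is the eigenvalue counting function of $P$ from Lemma \ref{EO}(iv).

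Since $P$ is a positive elliptic operator of order $2m$ on the compact manifold $M$, the Weyl asymptotic formula (\ref{WAF}) supplies the same constant $C_3$ that already appeared in (\ref{eq:2.11})--(\ref{eq:2.12}): namely $N(\nu)\le C_3\,\nu^{n/(2m)}$ valid for all $\nu\ge 0$. Substituting $\nu=\lambda_{A,\alpha,k}^2+\widetilde{\lambda_\alpha}^m$ then delivers the asserted bound.

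The only mildly delicate point is passing from the purely asymptotic Weyl estimate $N(\nu)\sim C_1\nu^{n/(2m)}$ to a uniform-in-$\nu$ inequality, which requires slightly enlarging the asymptotic constant so as to absorb the finitely many low-$\nu$ exceptions. This step is, however, exactly the one already carried out inside the proof of Theorem \ref{thm:2.2}, so it contributes no new idea; beyond it the lemma is a short combination of isotypic decomposition and the relation (\ref{ROEV}).
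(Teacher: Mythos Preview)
Your argument is correct and is essentially the paper's own proof: the paper obtains the bound by including $W_{\lambda_{A,\alpha,k}}$ in $\overline{\bigoplus_{|\lambda_A|\le \lambda_{A,\alpha,k}}V_{\lambda_A}}$ and then quoting (\ref{eq:2.12}), which is precisely the Weyl estimate for $P$ that you invoke directly via the relation (\ref{ROEV}). The only cosmetic difference is that you skip the detour through the $A$-eigenspace sum and pass immediately to the $P$-eigenspace inclusion; the underlying dimension count is the same.
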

\begin{proof}
As in section 3, we set $V_{\lambda_A}:=\{\phi \in C^\infty(E)\ | \ A\phi = \lambda_A \phi \}$. Since then
$$
W_{{\lambda_{A,\alpha,k}}}
\subset
\overline{ 
\bigoplus
_{-{\lambda_{A,\alpha,k}}
\leq
\lambda_A
{\leq\lambda_{A,\alpha,k}}}
V_{\lambda_A}},
$$
we have
$$
n_\alpha
\left(
W_{{\lambda_{A,\alpha,k}}}
\right)
d_\alpha
\leq
n_\alpha
\left(
\overline{
\bigoplus
_{-{\lambda_{A,\alpha,k}}
\leq
\lambda_A
{\leq\lambda_{A,\alpha,k}}}
V_{\lambda_A}
}\right)
d_\alpha.
$$
By the inequality (\ref{eq:2.12}) in the previous section, we have (\ref{EST}), as claimed. 
\end{proof}
\begin{lem}\label{EFB}
Let $\{\psi_k\}_{k\in\N}$ be an orthonormal basis of $L^2(E)$ consisting of eigenfunctions of $P$ and we denote the corresponding eigenvalues by
$\widehat{\lambda_{P,1}}\leq\widehat{\lambda_{P,2}}\leq \cdots \leq \widehat{\lambda_{P,k}}\leq\cdots.
$
Then for each $\alpha$ and $k$, the following inequality holds:
$$
{\lambda_{A,\alpha,k}}
\geq
\begin{cases}
\sqrt{\widehat{\lambda_{P,k}}}
-
\sqrt{ \widetilde{\lambda_\alpha}^m} &(\widehat{\lambda_{P,k}}\geq  \widetilde{\lambda_\alpha}^m) \\
0 &(\mathrm{otherwise}).
\end{cases}
$$
\end{lem}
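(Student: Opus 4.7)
The plan is to reduce Lemma \ref{EFB} to a simple counting argument built on the relation (\ref{ROEV}), which on the isotypical component $H_\alpha$ ties each joint eigenfunction of $A$ and $P$ to the identity $\widehat{\lambda_{P,\alpha,j}}=\widehat{\lambda_{A,\alpha,j}}^2+\widetilde{\lambda_\alpha}^m$. Because $A$ is nonnegative, a lower bound on $\lambda_{A,\alpha,k}$ should follow from an upper bound on sufficiently many $P$-eigenvalues living inside $H_\alpha$, which must then appear in the globally ordered sequence $\widehat{\lambda_{P,1}}\leq\widehat{\lambda_{P,2}}\leq\cdots$.

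Concretely, I would first form the subspace
$$
V:=\bigoplus_{j=1}^{k}W_{\lambda_{A,\alpha,j}}\subset H_\alpha\subset L^2(E).
$$
Since $A$ is formally self-adjoint, the summands are mutually orthogonal and each is nontrivial, so $\dim V\geq k$. For any $\phi\in W_{\lambda_{A,\alpha,j}}$ one has $A\phi=\lambda_{A,\alpha,j}\phi$ and, by Lemma \ref{EODG}, $\Delta_G\phi=\widetilde{\lambda_\alpha}\phi$, whence $P\phi=(\lambda_{A,\alpha,j}^2+\widetilde{\lambda_\alpha}^m)\phi$; using $\lambda_{A,\alpha,j}\geq 0$ and $j\leq k$, the corresponding $P$-eigenvalue is at most $\lambda_{A,\alpha,k}^2+\widetilde{\lambda_\alpha}^m$. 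Thus $V$ lies in the span of those $P$-eigenspaces whose eigenvalues do not exceed $\lambda_{A,\alpha,k}^2+\widetilde{\lambda_\alpha}^m$, so counting with multiplicity inside all of $L^2(E)$ yields at least $k$ such eigenvalues and hence
$$
\widehat{\lambda_{P,k}}\leq \lambda_{A,\alpha,k}^2+\widetilde{\lambda_\alpha}^m.
$$

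The remainder is a short algebraic manipulation. When $\widehat{\lambda_{P,k}}\geq\widetilde{\lambda_\alpha}^m$, rearranging and taking the (nonnegative) square root gives $\lambda_{A,\alpha,k}\geq\sqrt{\widehat{\lambda_{P,k}}-\widetilde{\lambda_\alpha}^m}$, and the elementary inequality $\sqrt{a-b}\geq\sqrt{a}-\sqrt{b}$ for $a\geq b\geq 0$ then delivers the form stated in the lemma. Otherwise $\widehat{\lambda_{P,k}}<\widetilde{\lambda_\alpha}^m$, in which case the claim reduces to $\lambda_{A,\alpha,k}\geq 0$, immediate from the nonnegativity of $A$. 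The only point that needs care—barely an obstacle—is the counting step: $V$ must be regarded as a subspace of the whole $L^2(E)$, so that $\widehat{\lambda_{P,k}}$ refers to the $k$-th eigenvalue of $P$ in the \emph{entire} space rather than within a single isotypical component, and the dimension count is done with multiplicities so that multi-dimensional $W_{\lambda_{A,\alpha,j}}$ do not weaken the conclusion.
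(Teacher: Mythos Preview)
Your argument is correct and follows essentially the same approach as the paper. The paper's proof compresses your counting step into the single observation $\widehat{\lambda_{P,\alpha,k}}\geq\widehat{\lambda_{P,k}}$ (the $k$-th $P$-eigenvalue within $H_\alpha$ is at least the $k$-th $P$-eigenvalue in all of $L^2(E)$), then applies the relation $\widehat{\lambda_{A,\alpha,k}}^2=\widehat{\lambda_{P,\alpha,k}}-\widetilde{\lambda_\alpha}^m$ and the inequality $\lambda_{A,\alpha,k}\geq\widehat{\lambda_{A,\alpha,k}}$; your construction of $V=\bigoplus_{j=1}^k W_{\lambda_{A,\alpha,j}}$ and dimension count is exactly what justifies that one-line inequality, just phrased directly in terms of the distinct eigenvalues $\lambda_{A,\alpha,k}$.
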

\begin{proof}
Since 
\begin{eqnarray}\label{EE}
\widehat{\lambda_{P,\alpha,k}}\geq \widehat{\lambda_{P,k}}
\end{eqnarray}
for all $k \in\mathbb{N}$, it follows from (\ref{EJ}) and (\ref{EE}) that
\begin{eqnarray*}
\widehat{\lambda_{A,\alpha,k}}^2
=
\widehat{\lambda_{P,\alpha,k}}-  \widetilde{\lambda_{\alpha}}^m
\geq
\widehat{\lambda_{P,k}}-  \widetilde{\lambda_{\alpha}}^m
\end{eqnarray*}
for each $k$ and $\alpha$. Since $\widehat{\lambda_{A,\alpha,k}}$ is nonnegative, we have 
$$
\lambda_{A,\alpha,k}
\geq
\widehat{\lambda_{A,\alpha,k}}
\geq
\begin{cases}
\sqrt{\widehat{\lambda_{P,k}}- \widetilde{\lambda_\alpha}^m}
\geq 
\sqrt{\widehat{\lambda_{P,k}}}
-
\sqrt{ \widetilde{\lambda_\alpha}^m} &(\widehat{\lambda_{P,k}}
\geq \widetilde{\lambda_\alpha}^m) \\
0 &(\mathrm{otherwise}),
\end{cases}
$$
as claimed.
\end{proof}
\begin{lem}\label{DOL}
For each $\e>0$, define $L_\e:= e^\frac{4m}{\e}$. Then
$
x^{4m} < e^{\e x }
$
for all $x>L_{\e}$. 
\end{lem}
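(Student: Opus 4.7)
The plan is to take logarithms and reduce the inequality to an elementary monotonicity statement. Since $x > 0$, the inequality $x^{4m} < e^{\e x}$ is equivalent to
\[
g(x) := \e x - 4m \ln x > 0.
\]
So it suffices to study the single-variable function $g$ on $(0,\infty)$.

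First I would compute $g'(x) = \e - 4m/x$, which is strictly positive precisely when $x > 4m/\e$. Next I would observe that $L_\e = e^{4m/\e}$ is already larger than $4m/\e$, since the elementary inequality $e^y > y$ holds for every $y > 0$ (so in particular for $y = 4m/\e$). Consequently $g$ is strictly increasing on the interval $[L_\e, \infty)$, and it is enough to verify the single inequality $g(L_\e) > 0$.

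To handle that boundary case, I would plug in $x = L_\e$: we need $\e\, e^{4m/\e} > 4m \cdot (4m/\e) = 16m^2/\e$, which rearranges to $e^u > u^2$ for $u := 4m/\e > 0$. This is a standard elementary estimate; for instance, one can minimize $\phi(u) := u - 2\ln u$ on $(0,\infty)$, finding a unique critical point at $u=2$ with $\phi(2) = 2(1 - \ln 2) > 0$, so $\phi > 0$ throughout $(0,\infty)$. Exponentiating yields $e^u > u^2$, which gives $g(L_\e) > 0$, and combined with the monotonicity in the previous paragraph concludes the proof.

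There is no serious obstacle here: the argument is a short calculus exercise, and the only subtlety is making sure that $L_\e$ actually lies in the region where $g$ has already begun to increase, which is exactly what the inequality $e^y > y$ (applied at $y = 4m/\e$) guarantees.
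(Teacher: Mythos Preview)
Your proof is correct. Both arguments reduce the claim to the logarithmic inequality $4m\ln x<\e x$, but the paper gets there by a slightly different, more direct route: from $x>L_\e=e^{4m/\e}$ it immediately reads off $\ln x>4m/\e$, and then uses the elementary bound $\ln x<\sqrt{x}$ (equivalently $(\ln x)^2<x$) to write
\[
\frac{\ln x}{x}<\frac{1}{\ln x}<\frac{\e}{4m},
\]
which is exactly $4m\ln x<\e x$. In contrast, you argue by monotonicity of $g(x)=\e x-4m\ln x$ past $4m/\e$ and then separately verify $g(L_\e)>0$ via the auxiliary estimate $e^u>u^2$. The paper's trick avoids the boundary check entirely and needs only one standard inequality; your approach is a bit longer but is the more systematic calculus template (derivative test plus endpoint value), and as a bonus it actually gives the strict inequality even at $x=L_\e$. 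Either way the lemma is immediate.
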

\begin{proof}
Since $x>L_\e(>1)$, we have $\log x > \frac{4m}{\e}$. Then it follows from the fact $\log x < \sqrt{x}$ that 
$$
\frac{\log x}{x} < \frac{1}{\log x} < \frac{\e}{4m}.
$$
Therefore $\log x < \frac{\e}{4m} x$, which is equivalent to $x^{4m}< e^{\e x}$.
\end{proof}

\begin{lem}\label{DOK}
For each $\alpha$ and $\e>0$ we define 
$$K_{\alpha,\e}
:=
\min 
\left\{
k 
\ \left|\ 
\sqrt{\widehat{\lambda_{P,k}}}>\sqrt{  \widetilde{\lambda_\alpha}^m}+L_\e 
\right. 
\right\}.
$$
If $k\geq K_{\alpha,\e}$, then \ 
{\rm(i)}
$
\widehat{\lambda_{P,k}}
\geq 
\widetilde{\lambda_\alpha}^m
$
\  and \ 
{\rm(ii)}
$
\lambda_{A,\alpha,k}>L_{\e}.
$
\end{lem}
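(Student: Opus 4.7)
The proof is essentially a direct unpacking of the definition of $K_{\alpha,\e}$ together with Lemma \ref{EFB}. The plan is as follows.

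First I would observe that by the ordering $\widehat{\lambda_{P,1}} \leq \widehat{\lambda_{P,2}} \leq \cdots$ established in Lemma \ref{EFB}, the sequence $\sqrt{\widehat{\lambda_{P,k}}}$ is nondecreasing in $k$. Hence if $k \geq K_{\alpha,\e}$, then by the very definition of $K_{\alpha,\e}$ as a minimum,
\[
\sqrt{\widehat{\lambda_{P,k}}} \;>\; \sqrt{\widetilde{\lambda_\alpha}^m} + L_\e.
\]
For (i), I would note that $L_\e = e^{4m/\e} > 0$, so the above inequality implies $\sqrt{\widehat{\lambda_{P,k}}} > \sqrt{\widetilde{\lambda_\alpha}^m}$, and squaring both (nonnegative) sides yields $\widehat{\lambda_{P,k}} > \widetilde{\lambda_\alpha}^m$, which in particular gives the desired $\widehat{\lambda_{P,k}} \geq \widetilde{\lambda_\alpha}^m$.

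For (ii), having established (i), I am in the first case of the bound in Lemma \ref{EFB}, which gives
\[
\lambda_{A,\alpha,k} \;\geq\; \sqrt{\widehat{\lambda_{P,k}}} - \sqrt{\widetilde{\lambda_\alpha}^m}.
\]
Combining this with the defining inequality of $K_{\alpha,\e}$ recalled above, I get $\lambda_{A,\alpha,k} > L_\e$, as required.

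There is no real obstacle here: both conclusions follow at once from monotonicity of $\{\widehat{\lambda_{P,k}}\}_k$, positivity of $L_\e$, and Lemma \ref{EFB}. The only point worth stating carefully is that $K_{\alpha,\e}$ is well-defined, i.e.\ that the set over which the minimum is taken is nonempty; this is immediate since $\widehat{\lambda_{P,k}} \to \infty$ as $k \to \infty$ by Lemma \ref{EO}\,(i) applied to $P$.
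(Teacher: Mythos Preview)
Your proof is correct and follows essentially the same route as the paper's own argument: use the defining inequality of $K_{\alpha,\e}$ together with $L_\e>0$ to get (i), and then invoke Lemma~\ref{EFB} in its first case to deduce (ii). Your added remarks on the monotonicity of $\{\widehat{\lambda_{P,k}}\}_k$ and the well-definedness of $K_{\alpha,\e}$ (via $\widehat{\lambda_{P,k}}\to\infty$) make explicit two points the paper leaves implicit, but the substance of the proof is the same.
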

\begin{proof}
(i) Since 
$
\sqrt{\widehat{\lambda_{P,k}}}
>
\sqrt{  \widetilde{\lambda_\alpha}^m}+L_\e 
>
\sqrt{  \widetilde{\lambda_\alpha}^m},
$
we have 
$
\widehat{\lambda_{P,k}}
\geq 
\widetilde{\lambda_\alpha}^m.\\
$
(ii) By Lemma \ref{EFB} and (i), we have 
$
\lambda_{A,\alpha,k}
\geq\sqrt{\widehat{\lambda_{P,k}}}
-
\sqrt{ \widetilde{\lambda_\alpha}^m}>L_{\e}.
$
\end{proof}
\begin{proof}[Proof of Theorem \ref{Main}]
(i)
Using the inequality (\ref{EST}), we have
\begin{eqnarray*}
f_\alpha(t)
&=&
\sum_{k=1}^\infty 
e^{-t{\lambda_{A,\alpha,k}}} \ 
n_\alpha(W_{{\lambda_{A,\alpha,k}}})
\\
&\leq&
\sum_{k< K_{\alpha,\e}} 
e^{-t{\lambda_{A,\alpha,k}}} \ 
n_\alpha(W_{{\lambda_{A,\alpha,k}}})
+
\sum_{K_{\alpha,\e}\leq k}
e^{-t{\lambda_{A,\alpha,k}}} \ 
n_\alpha(W_{{\lambda_{A,\alpha,k}}})
\\
&\leq&
C_5(\alpha,\e)
+
\sum_{K_{\alpha,\e}\leq k}
e^{-t{\lambda_{A,\alpha,k}}} \ 
({\lambda_{A,\alpha,k}}^2+ \widetilde{\lambda_\alpha}^m)^\frac{2m}{n}, 
\eeqs
where 
$
C_5(\alpha,\e)
:=
\sum_{k< K_{\alpha,\e}} 
n_\alpha(W_{{\lambda_{A,\alpha,k}}}).
$
By Lemma \ref{DOK} (ii) we have  
$$
1< L_\e < \lambda_{A,\alpha,k} \leq \lambda_{A,\alpha,k}^2+\widetilde{\lambda_\alpha}^m 
$$
for all $k \geq K_{\alpha,\e}$. Therefore
\beqs
\sum_{K_{\alpha,\e}\leq k}
e^{-t{\lambda_{A,\alpha,k}}} \ 
({\lambda_{A,\alpha,k}}^2+ \widetilde{\lambda_\alpha}^m)^\frac{2m}{n}
&\leq&
\sum_{K_{\alpha,\e}\leq k} 
e^{-t{\lambda_{A,\alpha,k}}} \ 
({\lambda_{A,\alpha,k}}^2+ \widetilde{\lambda_\alpha}^m)^{2m}
\\
&=&
\sum_{K_{\alpha,\e}\leq k} 
e^{-t{\lambda_{A,\alpha,k}}} \ 
\sum_{r=0}^{2m}
\binom{2m}{r}
({\lambda_{A,\alpha,k}}^2)^r 
( \widetilde{\lambda_\alpha}^m)^{2m-r}
\\
&\leq&
\sum_{K_{\alpha,\e}\leq k} 
{\lambda_{A,\alpha,k}}^{4m}
e^{-t{\lambda_{A,\alpha,k}}} \ 
\sum_{r=0}^{2m}
\binom{2m}{r} 
( \widetilde{\lambda_\alpha}^m)^{2m-r}
\\
&=&
(1+\widetilde{\lambda_\alpha}^m)^{2m}
\sum_{K_{\alpha,\e}\leq k} 
e^{-t{\lambda_{A,\alpha,k}}} \ 
({\lambda_{A,\alpha,k}})^{4m}
\end{eqnarray*}
By Lemma \ref{DOK} (ii) and Lemma \ref{DOL}, we have 
\begin{eqnarray*}
\sum_{K_{\alpha,\e} \leq k}
e^{-t{\lambda_{A,\alpha,k}}} 
({\lambda_{A,\alpha,k}})^{4m}
&\leq&
\sum_{K_{\alpha,\e} \leq k}
e^{-t{\lambda_{A,\alpha,k}}} 
e^{\e{\lambda_{A,\alpha,k}}}
=
\sum_{K_{\alpha,\e} \leq k}
e^{-(t-\e){\lambda_{A,\alpha,k}}},
\end{eqnarray*}
By Lemma \ref{EFB} and Lemma \ref{DOK} (ii), we have
\begin{eqnarray*}
\sum_{K_{\alpha,\e} \leq k}
e^{-(t-\e){\lambda_{A,\alpha,k}}}
&\leq&
\sum_{K_{\alpha,\e} \leq k}
e^{-(t-\e)
\left(\sqrt{\widehat{\lambda_{P,k}}}
-
\sqrt{ \widetilde{\lambda_\alpha}^m}
\right)}
=
e^{(t-\e)
{\widetilde{\lambda_\alpha}}^{m/2}}
\sum_{K_{\alpha,\e} \leq k}
e^{-(t-\e)
\sqrt{\widehat{\lambda_{P,k}}}
}
\\
&\leq&
e^{
t{\widetilde{\lambda_\alpha}}^{m/2}}
\sum_{K_{\alpha,\e} \leq k}
e^{-(t-\e)
\sqrt{\widehat{\lambda_{P,k}}}
}
\leq
e^{
t{\widetilde{\lambda_\alpha}}^{m/2}}
\sum_{k=1}^\infty
e^{-(t-\e)
\sqrt{\widehat{\lambda_{P,k}}}
}.
\end{eqnarray*}
Moreover, by the asymptotic formula (\ref{WAF2}), we have
\begin{eqnarray*}
\sum_{k=1}^\infty
e^{-(t-\e)
\sqrt{\widehat{\lambda_{P,k}}}}
\leq
\sum_{k=1}^\infty
e^{-\sqrt{C_2}(t-\e)
k^{m/n}}
\leq
\sum_{k=1}^\infty
e^{-\sqrt{C_2}(t-\e)
k^{1/n}}
\leq
\int_0^\infty e^{-\sqrt{C_2} (t-\e) x^{1/n}} dx,
\end{eqnarray*}
If we put $l=x^{1/n}$, then we have
\begin{eqnarray*}
\int_0^\infty e^{-\sqrt{C_2} (t-\e) x^{1/n}} dx
&=&
\int_0^\infty e^{-\sqrt{C_2} (t-\e) l}\  \frac{dx}{dl} \ dl  
\\
&=&
n \int_0^\infty l^{n-1}\ e^{-\sqrt{C_2} (t-\e) l}\ dl
\\
&=&
\begin{cases}
\frac{n!}{(\sqrt{C_2})^n(t-\e)^n} & (t>\e) \\
\infty & (t \leq \e)
\end{cases}
\end{eqnarray*}
Hence if $t > \e$, we have 
\begin{eqnarray*}
f_\alpha(t)
&\leq&
C_5(\alpha,\e) 
+
\frac{n!(1+\widetilde{\lambda_\alpha}^m)^{2m}}{(\sqrt{C_2})^n(t-\e)^n}
e^{
t{\widetilde{\lambda_\alpha}}^{m/2}}.
\end{eqnarray*}
for each $\alpha$, and $\e>0$. Since $C_2=C_1^{-\frac{2m}{n}}$  (see Corollary \ref{WAF3}), we have (\ref{main}), as claimed.

(ii) To show that $f_\alpha(t)$ is smooth on $(0,\infty)$, it suffices to prove that $f_\alpha(t)$ is smooth on $[a,b]$ for each $0<a<b<\infty$. First we put
$$
g_{\alpha,k}(t)={e^{-t{\lambda_{A,\alpha,k}}} \ 
n_\alpha(W_{{\lambda_{A,\alpha,k}}})
}
$$
so that $f_\alpha(t)=\sum_{k} g_{\alpha,k}(t)$ and denote by
$$
g_{\alpha,k}^{(l)}(t)
=
(-\lambda_{A,\alpha,k})^l
{e^{-t{\lambda_{A,\alpha,k}}} \ 
n_\alpha(W_{{\lambda_{A,\alpha,k}}})
}
$$
its $l$-th derivative with respect to parameter $t$ for each $l\in\mathbb{N}$. Then we have
\begin{eqnarray}
\label{Maj}
\sum_{k=1}^\infty  \sup_{a\leq t\leq b}|g_{\alpha,k}^{(l)}(t)|
&=&
\sum_{k=1}^\infty  |g_{\alpha,k}^{(l)}(a)|
=
\sum_{k=1}^\infty  
({{\lambda_{A,\alpha,k}}})^l\ 
e^{-a{{\lambda_{A,\alpha,k}}}}\ 
n_\alpha(W_{{\lambda_{A,\alpha,k}}})
\end{eqnarray}
By (\ref{EST}), we have
\begin{eqnarray}
\sum_{k=1}^\infty  
({{\lambda_{A,\alpha,k}}})^l\ 
e^{-a{{\lambda_{A,\alpha,k}}}}\ 
n_\alpha(W_{{\lambda_{A,\alpha,k}}})
&\leq&
\nonumber
C_3
\sum_{k=1}^\infty  
{({\lambda_{A,\alpha,k}})}^l
e^{-a{{\lambda_{A,\alpha,k}}}}
({\lambda_{A,\alpha,k}}^2+  \widetilde{\lambda_\alpha}^m)^\frac{2m}{n}
\nonumber
\\
&\leq&
\nonumber
C_3
\sum_{k=1}^\infty  
{({\lambda_{A,\alpha,k}})}^l
e^{-a{{\lambda_{A,\alpha,k}}}}
(1+{\lambda_{A,\alpha,k}}^2+  \widetilde{\lambda_\alpha}^m)^\frac{2m}{n}
\\
&\leq&
\nonumber
C_3
\sum_{k=1}^\infty  
{({\lambda_{A,\alpha,k}})}^l
e^{-a{{\lambda_{A,\alpha,k}}}}
(1+{\lambda_{A,\alpha,k}}^2+  \widetilde{\lambda_\alpha}^m)^{2m}
\\
&\leq&
\label{MAJ}
C'(\alpha)
\sum_{k=1}^\infty
e^{-a{{\lambda_{A,\alpha,k}}}}
\ {(\lambda_{A,\alpha,k})}^{4m+l}.
\end{eqnarray}
In view of the proof of Theorem \ref{Main} $\rm{(i)}$, the series (\ref{MAJ}) converges. Hence the series 
$$
\sum_{k} g_{\alpha,k}^{(l)}(t)
$$
converges uniformly on $[a,b]$ for each $0<a<b<\infty$ and $l \in \N$. Thus $f_\alpha(t)$ is smooth on $(0,\infty)$.
\end{proof}


\end{document}